\newtheorem{thm}{Theorem}[section]
\newtheorem{cor}[thm]{Corollary}
\newtheorem{lem}[thm]{Lemma}
 \newcommand{\RM}{\mathbb{R}}
 \newcommand{\ZM}{\mathbb{Z}}
 \newcommand{\CM}{\mathbb{C}}
 \newcommand{\dr}{\operatorname{dr}}
 \newcommand{\ds}{\operatorname{ds}}
 \newcommand{\dlambda}{\operatorname{d\lambda}}
\title{Fourier inversion theorems for integral transforms involving Bessel functions}
\author{Alexey Gorshkov}
\begin{document}

\maketitle

\begin{abstract} 
Usually such area of mathematics as differential equations acts as a consumer of results given by functional analysis. This article will give an example of the reverse interaction of these two fields of knowledge. Namely, the derivation and study of integral transforms will be carried out using partial differential equations. We will study generalised Weber-Orr transforms - its invertibility theorems in $f\in L_1\cap L_2$, spectral decomposition, Plancherel-Parseval identity. These transforms possess nontrivial kernel, so spectral decomposition must involve not only continuous spectrum, but also eigen functions which correspond to zero eigen value.  We give a new approach to the study of classical and generalized Weber-Orr transforms with a complete derivation of the inversion formulas.
\end{abstract}

{Primary 76D07; Secondary 33C10}

\tableofcontents

\section{Introduction}

Most of Fourier transforms are based on generalised eigen finctions associated with some differential operator $A$ and its inversion formulas represent the spectrum decomposition of $A$. If $A$ obtains complete system of generalised eigen functions and $\ker(A)=\{0\}$ then inversion property as well as Plancherel theorem become valid for corresponding Fourier transform.

But $A$ with nontrivial kernel breaks down all basic properties of the Fourier transform $F$. Thus transform $F$ also gets nontrivial kernel. In order to complete the system of generalised eigen functions one should add ordinary eigen functions $\{e_k\}$ associated with eigenvalue $\lambda=0$. And Plancherel equity doesn't hold until we complete it by the sum of squares of the Fourier coefficients gaining Plancherel-Parseval type identity like 
\begin{eqnarray}\label{intro:PP}
\|f\|^2 = \|F[f]\|^2+\sum_k (f,e_k)^2.
\end{eqnarray}

This article is devoted to transforms involving Bessel functions. Here we deduce invertibility theorems for Weber-Orr transforms even when some of them posess nontrivial kernel and prove Plancherel-Parseval identity above.  Applying presented methods one could obtain similar proofs for Hankel transform as well as for other ones involving Bessel functions.	

The inversion formulas for the Weber-Orr transforms, since the first publication of these transforms by Weber in 1873, have traditionally been given under the assumption of a limited variation for the original function $f$. In paper \cite{Tm} published in 1923 Titchmarsh proved inversion theorem under the assumption of bounded variation of $f$: 
\begin{equation}\label{intro:inv}
\frac{f(r-0)+f(r+0)}2 = W_{k,k}^{-1}\left [ W_{k,k}[f]\right ](r), r>r_0,
\end{equation}
where
\begin{align}\label{intro:weberorr}
W_{k,k}[f](\lambda) = \int_{r_0}^\infty \frac{J_{k}(\lambda s)Y_{k}(\lambda r_0) - Y_{k}(\lambda s)J_{k}(\lambda r_0)}{\sqrt{J_{k}^2(\lambda r_0) + Y_{k}^2(\lambda r_0)}} f(s) s \ds,k,k\in \RM,
\\ \label{int:weberorrinv}
W^{-1}_{k,k}[\hat f](r) = \int_{0}^\infty \frac{J_{k}(\lambda r)Y_{k}(\lambda r_0) - Y_{k}(\lambda r)J_{k}(\lambda r_0)}{\sqrt{J_{k}^2(\lambda r_0) + Y_{k}^2(\lambda r_0)}} \hat f (\lambda) \lambda \dlambda,
\end{align}
where $r_0>0$, $J_k(r)$, $Y_k(r)$ are Bessel functions of the first and second kind respectively.

Later, the definition of the Weber-Orr transforms was generalized to the case of various indices $k,l$:
\begin{align}\label{int:weberorr}
W_{k,l}[f](\lambda) = \int_{r_0}^\infty \frac{J_{k}(\lambda s)Y_{l}(\lambda r_0) - Y_{k}(\lambda s)J_{l}(\lambda r_0)}{\sqrt{J_{l}^2(\lambda r_0) + Y_{l}^2(\lambda r_0)}} f(s) s \ds,k,l\in \RM,
\\ \label{int:weberorrinv}
W^{-1}_{k,l}[\hat f](r) = \int_{0}^\infty \frac{J_{k}(\lambda r)Y_{l}(\lambda r_0) - Y_{k}(\lambda r)J_{l}(\lambda r_0)}{\sqrt{J_{l}^2(\lambda r_0) + Y_{l}^2(\lambda r_0)}} \hat f (\lambda) \lambda \dlambda.
\end{align}
	
Further studies of generalized Weber-Orr transforms were based on the works of Weber and Titchmarsh, and the restrictions on the  function $f$ as well as the equality in the form (\ref{intro:inv}) remained the same.

In this paper will be given new proofs of Weber-Orr transform including Plancherel type equities and the validity of the inversion formulas for $f\in L_1\cap L_2$ which is more typical for Fourier transforms. We give a new approach to the study of generalized and classical Weber-Orr transforms with a complete derivation of inversion formulas in the class of integrable functions where equality (\ref{intro:inv}) will be replaced by identity almost everywhere.

Weber-Orr transform is based on generalised eigen functions of operator 
$$
\Delta_k  = \frac 1r \frac {\partial}{\partial r}\left(r \frac {\partial}{\partial r}\right) - \frac{k^2}{r^2}.
$$
In unbounded domains with various boundary conditions it has continuous spectrum $\sigma(\Delta) = \RM_-$. Its resolvent $R(\tau) = (\Delta_k-\tau I)^{-1}$ is the branching function with jump discontinuity along spectrum at branch point $\tau=0$. In case of nontrivial kernel in addition to continuous part of spectrum the eigen value $\tau=0$ will generate term like $R_{-1}/\tau$ in expansion of $R(\tau)$, where $R_{-1}$ is the projector onto eigen subspace $\ker(\Delta_k)$ leading to equities like (\ref{intro:PP}). In this paper will be shown that namely $W_{k,k\pm 1}$ have a nontrivial kernel. 


The Weber-Orr transforms find an application in many fields of mathematics including analysis and mathematical physics.
With its help can be solved various boundary-value problems with Dirichlet condition. Of particular interest are the transforms $W_{k,k\pm 1}$. Using $W_{k,k-1}$ the author in \cite{G1} solved the non-stationary Stokes problem in exterior of the disc by explicit formula for vorticity in terms of Fourier coefficients $w_k(t,r)$:  
$$w_k(t,r) = W^{-1}_{|k|,|k|-1}\left [ e^{-\lambda^2 t} W_{|k|,|k|-1}[w(0,r)] \right ].$$
Then in paper \cite{G3} this integral transform was applied to 2D and 3D Navier-Stokes systems in external domains, so this type of transform can be considered as {\it hydrodynamic Fourier transform}. 

\section{Classical Weber-Orr Transform $W_{k,k}[\cdot]$}

In this section we prove inversion formula for $W_{k,k}[\cdot]$. The Fourier transform typically is based on generalized eigenfunctions of the Laplace operator $\Delta$ defined in an unbounded domain. $\Delta$ is a generator of a strongly continuous semigroup that solves a certain initial-boundary value problem for some partial differential equation. The transformation $W_{k,k}[\cdot]$ can be associated with the Dirichlet initial-boundary value problem for the heat equation in the exterior of a circle within radius $r_0>0$ with zero condition on its boundary. 

For functions defined on $E\subset \RM_+$ along with classic spaces $L_1(E)$, $L_2(E)$ we will use $L_2(E; r)$ of square-integrable functions with  infinitesimal element $r\dr$ supplied with the norm
$$
\|f\|^2_{L_2(E; r)}=\int\limits_E |f(r)|^2 r\dr.
$$

\begin{thm}\label{wkk:thm}Let $f(r) \sqrt r \in L_1(r_0,\infty)\cap L_2(r_0,\infty)$, $k \in \RM$, $r_0>0$. Then the  Weber-Orr transforms $W_{k,k}[\cdot]$ defined by (\ref{int:weberorr}), (\ref{int:weberorrinv}) satisfy  almost everywhere 
\begin{align}\label{wkk:inversion_formula}
f(r) = W^{-1}_{k,k}\left [W_{k,k} [f] \right ](r), 
\end{align}
and the following Plansherel equity holds:
$$
||f||_{L_2(r_0,\infty; r)} = \| W_{k,k}[f] \|_{L_2(0,\infty; r) } 
$$
\end{thm}

We derive the relation (\ref{wkk:inversion_formula}) from some initial boundary value problem for heat equation with the initial datum $f(r)$. 


Now we prove the following 
\begin{lem} \label{wkk:lemest} For $\lambda>1$ with some $C=C(k, r_0)$ holds
$$\left |W_{k,k} [f] \right (\lambda) | \leq \frac C {\sqrt \lambda} \|f \sqrt r \|_{L_1(0,\infty)}. $$ 
\end{lem}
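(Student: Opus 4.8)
The plan is to exploit the well-known large-argument asymptotics of the Bessel functions to control the kernel of $W_{k,k}$ uniformly in $s$ for $\lambda > 1$. Recall that for fixed order $k$ and large argument $z$ one has $J_k(z) = \sqrt{2/(\pi z)}\,\cos(z - k\pi/2 - \pi/4) + O(z^{-3/2})$ and similarly for $Y_k$ with sine in place of cosine, so that $\sqrt{J_k^2(z) + Y_k^2(z)} = \sqrt{2/(\pi z)}\,(1 + O(z^{-1}))$. Consequently the denominator $\sqrt{J_k^2(\lambda r_0) + Y_k^2(\lambda r_0)}$ in (\ref{int:weberorr}) behaves like $\sqrt{2/(\pi\lambda r_0)}$ up to a bounded multiplicative factor once $\lambda r_0$ is bounded below, which for $\lambda>1$ and fixed $r_0$ it is. The numerator $J_k(\lambda s)Y_k(\lambda r_0) - Y_k(\lambda s)J_k(\lambda r_0)$ must be bounded for all $s \ge r_0$: for $\lambda s$ large the cross-combination of the oscillatory asymptotics gives a term of size $\sqrt{2/(\pi\lambda s)}\cdot\sqrt{2/(\pi\lambda r_0)}$ times a bounded trigonometric expression, hence $O((\lambda s)^{-1/2}(\lambda r_0)^{-1/2})$; for $\lambda s$ bounded (i.e.\ $r_0 \le s \le \text{const}/\lambda$) one uses instead the boundedness of $J_k$ near the origin together with the $O((\lambda r_0)^{-1/2})$ bound on $Y_k(\lambda r_0)$ and the fact that $Y_k(\lambda s)$ stays controlled on that range.

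Putting these together, the full kernel
$$
\frac{J_k(\lambda s)Y_k(\lambda r_0) - Y_k(\lambda s)J_k(\lambda r_0)}{\sqrt{J_k^2(\lambda r_0) + Y_k^2(\lambda r_0)}}
$$
is bounded, for $\lambda>1$ and all $s \ge r_0$, by $C(k,r_0)\,(\lambda s)^{-1/2}\cdot\sqrt{\lambda r_0}\cdot(\lambda r_0)^{-1/2}$ in the oscillatory regime, which simplifies to a quantity of order $\lambda^{-1/2} s^{-1/2}$; a separate, easier estimate handles the bounded-argument regime and yields the same order. Hence there is $C=C(k,r_0)$ with
$$
\left| \frac{J_k(\lambda s)Y_k(\lambda r_0) - Y_k(\lambda s)J_k(\lambda r_0)}{\sqrt{J_k^2(\lambda r_0) + Y_k^2(\lambda r_0)}} \right| \le \frac{C}{\sqrt{\lambda}\,\sqrt{s}}, \qquad \lambda > 1,\ s \ge r_0.
$$
Multiplying by $|f(s)|\,s$ and integrating then gives
$$
\left| W_{k,k}[f](\lambda) \right| \le \int_{r_0}^\infty \frac{C}{\sqrt{\lambda}\,\sqrt{s}}\,|f(s)|\,s\,\ds = \frac{C}{\sqrt{\lambda}} \int_{r_0}^\infty |f(s)|\,\sqrt{s}\,\ds = \frac{C}{\sqrt{\lambda}}\,\|f\sqrt{r}\|_{L_1(0,\infty)},
$$
which is the claimed bound (the $L_1$ norm over $(0,\infty)$ coinciding with that over $(r_0,\infty)$ since $f$ is supported there).

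The main obstacle is making the kernel bound genuinely uniform in $s$ down to $s=r_0$: the asymptotic expansions of $J_k,Y_k$ are only valid for large argument, so one must split the $s$-range at, say, $s = A/\lambda$ for a suitable constant $A$ and argue separately on $[r_0, A/\lambda]$ (possibly empty) using near-origin/finite-range bounds and on $[A/\lambda,\infty)$ using the asymptotics, then check the two estimates patch together with a single constant depending only on $k$ and $r_0$. Care is also needed that the error terms in the asymptotics of the denominator do not cause it to vanish — this is why the hypothesis $\lambda>1$ (keeping $\lambda r_0$ bounded away from the zeros-free asymptotic regime) is used rather than $\lambda>0$.
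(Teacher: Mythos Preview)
Your argument is correct and follows the same idea as the paper---control the kernel pointwise via the large-argument asymptotics of $J_k$ and $Y_k$, then integrate against $|f(s)|\sqrt{s}$. Two small points of comparison. First, the paper avoids bounding the denominator $\sqrt{J_k^2(\lambda r_0)+Y_k^2(\lambda r_0)}$ from below altogether: it simply uses the trivial inequality
\[
\frac{\max\bigl(|J_k(\lambda r_0)|,\,|Y_k(\lambda r_0)|\bigr)}{\sqrt{J_k^2(\lambda r_0)+Y_k^2(\lambda r_0)}}\le 1
\]
to split the kernel into two pieces, reducing everything to the single fact that $\sqrt{z}\,J_k(z)$ and $\sqrt{z}\,Y_k(z)$ are bounded on $[r_0,\infty)$. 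This is a bit cleaner than your route through a lower bound on the denominator, though both work. Second, your worry about a ``bounded-argument regime'' and a split at $s=A/\lambda$ is unnecessary: for $\lambda>1$ and $s\ge r_0$ one always has $\lambda s\ge r_0>0$, so $\lambda s$ is never near the origin, and boundedness of $\sqrt{z}\,J_k(z)$, $\sqrt{z}\,Y_k(z)$ on $[r_0,\infty)$ (continuity on compacts plus the asymptotics at infinity) handles the whole range at once.
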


\begin{proof}
Using 
\begin{align}\label{wkk:estJY}
 \frac{  \max \left(|J_{k}(\lambda r_0)|, |Y_{k}(\lambda r_0|)\right) }{\sqrt {{J_{k}(\lambda r_0)^2 + Y_{k}(\lambda r_0)^2}}}  \leq 1
\end{align}
and from the following asymptotic form of Bessel functions for large argument (see Bateman, Erdelyi\cite{BE})
\begin{align}\label{wkk:asymp1}
J_{k }(z)={\sqrt {\frac {2}{\pi z}}}\left(\cos \left(z-{\frac {k \pi }{2}}-{\frac {\pi }{4}}\right) - {(4k^2-1)(4k^2-9) \over 8z } \sin \left(z-{\frac {k \pi }{2}}-{\frac {\pi }{4}}\right) \right) 
   \\+\mathrm {O} \left({1 \over |z|^{2}} \right) \nonumber \\\label{wkk:asymp2}
   Y_{k }(z)={\sqrt {\frac {2}{\pi z}}}\left(\sin \left(z-{\frac {k \pi }{2}}-{\frac {\pi }{4}}\right) + {(4k^2-1)(4k^2-9) \over 8z } \cos \left(z-{\frac {k \pi }{2}}-{\frac {\pi }{4}}\right) \right) 
   \\+\mathrm {O} \left({1 \over |z|^{2}} \right) \nonumber 
\end{align}
we will have with some $C>0$
\begin{align*}
\left | \int_{r_0}^\infty
J_{k}(\lambda s) f(s) s\ds \right | =
\left |\frac 1{\sqrt {\lambda}} \int_{r_0}^\infty
J_{k}(\lambda s) \sqrt {\lambda  s}f(s)\sqrt s \ds \right | \leq \frac C {\sqrt \lambda} \|f \sqrt r \|_{L_1(0,\infty)},\\
\left | \int_{r_0}^\infty
Y_{k}(\lambda s) f(s) s\ds \right | =
\left |\frac 1{\sqrt {\lambda}} \int_{r_0}^\infty
Y_{k}(\lambda s) \sqrt {\lambda  s}f(s)\sqrt s \ds \right | \leq \frac C {\sqrt \lambda} \|f \sqrt r \|_{L_1(0,\infty)},
\end{align*}
and
$$
\left | \int_{r_0}^\infty \frac{J_{k}(\lambda s)Y_{k}(\lambda r_0) - Y_{k}(\lambda s)J_{k}(\lambda r_0)}{\sqrt{J_{k}^2(\lambda r_0) + Y_{k}^2(\lambda r_0)}} f(s) s \ds \right | \leq \frac {2C} {\sqrt \lambda} \|f \sqrt r \|_{L_1(0,\infty)}.
$$

The lemma is proved.

\end{proof}

\subsection{Initial boundary value problem}

Consider the equation
\begin{equation} \label{wkk:omegaeqpolar}
\frac{\partial w(t,r)}{\partial t} - \Delta_k w(t,r) = 0,
\end{equation}
where
$$
\Delta_k w(t,r) = \frac 1r \frac {\partial}{\partial r}\left(r \frac {\partial}{\partial r}w(t,r)\right) - \frac{k^2}{r^2} w(t,r).
$$

We supplement it with the Dirichlet boundary condition
\begin{equation} \label{wkk:omegaeqdirichlet}
w(t,r_0) = 0
\end{equation}
and the initial condition
\begin{equation} \label{wkk:omegaeqinit}
w(0,r) = f(r)
\end{equation}

For $k\in \ZM$ the problem (\ref{wkk:omegaeqpolar})-(\ref{wkk:omegaeqinit}) is the Dirichlet problem for the $k-$th Fourier coefficient of the heat equation in the exterior of a circle of radius $r_0$ with a zero boundary. But our study covers any real $k$ as well as complex one.

Laplace transform
$$
 \hat \omega(\tau,r )=\int_0^\infty e^{-\tau t}w(t,r) d\tau
$$
reduces (\ref{wkk:omegaeqpolar})-(\ref{wkk:omegaeqinit}) to Poison equation with parameter $\tau \in \CM$:
\begin{equation}\label{poison}
\Delta_k \hat \omega - \tau \hat \omega  = - f(r),
\end{equation}
or
\begin{equation} \label{besselrhs}
 \frac{\partial^2 \hat \omega}{\partial r^2} + \frac1r \frac{\partial \hat \omega}{\partial r} - \Big(\frac{k^2}{r^2} + \tau \Big) \hat \omega = -f(r).
\end{equation}
 
Homogeneous Bessel equation (\ref{besselrhs}) with zero right-hand side has two basis solutions -  modified Bessel functions of the first and second kind $I_k(\sqrt{\tau}r)$, $K_k(\sqrt{\tau}r)$.

Using
\begin{equation*}\label{wronsk}
I_k(r) \frac{\partial K_k(r)}{\partial r}-
K_k(r) \frac{\partial I_k(r)}{\partial r}=-r^{-1}.
\end{equation*}
we will have the particular solution of non-homogeneous Bessel equation (\ref{besselrhs}):
$$K_k(\sqrt{\tau}r)\int_{r_0}^r f(s) I_k(\sqrt{\tau}s) s ds \nonumber \\ + 
I_k(\sqrt{\tau}r)\int_r^{\infty} f(s) K_k(\sqrt{\tau}s) s ds. $$

Finally, excluding exponentially growing function $I_k(\sqrt{\tau}r)$ from (\ref{besselrhs}) using boundary condition (\ref{wkk:omegaeqdirichlet}) we have the formula: 
\begin{align} \label{directsol}
 \hat \omega(\tau,r) = - \frac{K_k(\sqrt{\tau}r)I_k(\sqrt{\tau}r_0)}{K_k(\sqrt{\tau}r_0)} \int_{r_0}^{\infty} f(s) K_k(\sqrt{\tau}s) s ds \\ + K_k(\sqrt{\tau}r)\int_{r_0}^r f(s) I_k(\sqrt{\tau}s) s ds \nonumber  + 
I_k(\sqrt{\tau}r)\int_r^{\infty} f(s) K_k(\sqrt{\tau}s) s ds.
\end{align}

In order to obtain the solution $w(t,r)$ we apply inverse Laplace transform
$$
w(t,r) = \frac1{2\pi i} \int_{\Gamma_\eta}e^{\tau t}\hat \omega(\tau,r) d\tau,
$$
where $\Gamma_\eta = \{ \tau \in \CM, \operatorname{Re}\tau = \eta \}$, $\eta$ - fixed arbitrary positive real number.

We change contour of integration from $\Gamma_\eta$ to Hankel-type contour $\Gamma_{-\eta, \varepsilon}^-$ with some $\varepsilon>0$, where
\begin{eqnarray}
\Gamma_{-\eta, \varepsilon}^- = ( -\eta -i\infty, -\eta -i\varepsilon] \cup
[-\eta - i\varepsilon, - i\varepsilon]     
\cup [- i\varepsilon, i\varepsilon] \nonumber \\
\cup [i\varepsilon, -\eta + i\varepsilon] 
\cup [-\eta + i\varepsilon, -\eta + i\infty). \nonumber
\end{eqnarray}

Define oriented contour $\gamma_{\pm,\eta} = [-\eta,0] \cup [0,-\eta]$. Then, going $\varepsilon \to 0$, $\eta \to \infty$ we will have:
\begin{align} \label{wkk:wksol}
&w(t,r) =  
\frac1{2\pi i} \left ( \int_{-\infty}^0 e^{\tau t}\hat \omega(\tau-i0,r)d\tau 
+ \int_0^{-\infty} e^{\tau t}\hat \omega(\tau+i0,r)d\tau \right )\\
&=
\frac1{\pi i}  \int_0^\infty e^{-\lambda^2 t} \left ( w(-\lambda^2-i0,r)  - w(-\lambda^2+i0,r) \right )
 \lambda \mathrm{d}\lambda  \nonumber.
\end{align} 

$I_k,K_k$ on imaginary line is given by formulas (see \cite{W}): 
\begin{align} \label{wkk:bescont1}
&I_k(-i\lambda r)=e^{-\frac {\pi i k}2} J_k(\lambda r),  \\  
&I_k(i\lambda r)=e^{-\frac {\pi i k}2}  J_k(-\lambda r),  \\ 
&K_k(-i\lambda r) = \frac{\pi i}2 e^{\frac {\pi i k}2} H_k^{(1)}(\lambda r),  \\ 
&K_k(i\lambda r) =\frac{\pi i}2 e^{\frac {\pi i k}2} H_{k}^{(1)}(-\lambda r),\\
&H_{k}^{(1)}(-\lambda r)=-e^{-\pi i k}H_{k}^{(2)}(\lambda r),\\
&J_{k}(-\lambda r)=e^{\pi i k} J_{k}(\lambda r).\label{wkk:bescont2}
\end{align}
Here $H_k^{(1)}$, $H_{k}^{(2)}$  - Hankel functions (see \cite{BE}, \cite{W}):
\begin{align*} 
&H_k^{(1)}(\lambda r) = J_k(\lambda r) + iY_k(\lambda r) \\
&H_{k}^{(2)}(\lambda r) =
J_{k}(\lambda r) - iY_{k}(\lambda r).
\end{align*}

We decompose function $\hat \omega(\tau,r)$ in (\ref{wkk:wksol}):
$$
 \hat \omega(\tau,r) = -G_{k,1}(\tau,r)+ G_{k,2}(\tau,r), 
$$ 
where $G_{k,1}(\tau,r)$, $G_{k,2}(\tau,r)$ are defined as 
\begin{align*}
G_{k,1}(\tau,r) &=  \frac{K_k(\sqrt{\tau}r)I_k(\sqrt{\tau}r_0)}{K_k(\sqrt{\tau}r_0)} \int_{r_0}^{\infty} f(s) K_k(\sqrt{\tau}s) s ds, \nonumber \\
G_{k,2}(\tau,r) &= K_k(\sqrt{\tau}r)\int_{r_0}^r f(s) I_k(\sqrt{\tau}s) s ds \nonumber \\ 
&+ I_k(\sqrt{\tau}r)\int_r^{\infty} f(s) K_k(\sqrt{\tau}s) sds. \nonumber
\end{align*}

Then with help of (\ref{wkk:bescont1})-(\ref{wkk:bescont2}) we will have
\begin{align*}
&G_{k,1}(-\lambda^2-i0,r)  - G_{k,1}(-\lambda^2+i0,r) = \\
&=\int_{r_0}^\infty \Big ( \frac{K_k(-i\lambda r) I_{k}(-i\lambda r_0)K_k(-i\lambda s)}{K_{k}(-i\lambda r_0)} \\
&- \frac{K_k(i\lambda r) I_{k}(i\lambda r_0)K_k(i\lambda s)}{K_{k}(i\lambda r_0)} \Big ) f(s)  s ds \nonumber \\
&=\frac{\pi i}2 \int_{r_0}^\infty \Big ( \frac{H_k^{(1)}(\lambda r) J_{k}(\lambda r_0)H_k^{(1)}(\lambda s)}{H_{k}^{(1)}(\lambda r_0)} \\
&- \frac{H_{k}^{(1)}(-\lambda r) J_{k}(-\lambda r_0)H_{k}^{(1)}(-\lambda s)}{H_{(k)}^{(1)}(-\lambda r_0)} \Big ) f(s)  s ds \nonumber \\
&=\frac{\pi i}2 \int_{r_0}^\infty \Big ( \frac{H_k^{(1)}(\lambda r) J_{k}(\lambda r_0)H_k^{(1)}(\lambda s)}{H_{k}^{(1)}(\lambda r_0)} \\
&+ \frac{H_{k}^{(2)}(\lambda r) J_{k}(\lambda r_0)H_{k}^{(2)}(\lambda s)}{H_{(k)}^{(2)}(\lambda r_0)} \Big ) f(s)  s ds \nonumber \\
&=\frac{\pi i}2 \int_{r_0}^\infty  \Big ( \frac{(J_k(\lambda r)+iY_k(\lambda r)) J_{k}(\lambda r_0)(J_k(\lambda s)+iY_k(\lambda s))}{J_{k}(\lambda r_0)+iY_{k}(\lambda r_0)} \\ 
& + \frac{(J_k(\lambda r)-iY_k(\lambda r)) J_{k}(\lambda r_0)(J_k(\lambda s)-iY_k(\lambda s))}{J_{k}(\lambda r_0)-iY_{k}(\lambda r_0)}  \Big ) f(s)  s ds.
\nonumber
\end{align*}

Here we expand one lemma which was proved in \cite{G2} for non-negative integer $k$ on the case of arbitrary $k\in\RM$:
\begin{lem} \label{wkk:besselrelationlem}
For any $k\in \RM$,  $r,s > 0$ modified Bessel functions $I_k,K_k$ will satisfy: 
$$
I_k(-ir)K_k(-is) - I_k(i r)K_k(is) = \pi i J_k( r)J_k( s).
$$
\end{lem}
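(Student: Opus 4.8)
The plan is to prove the identity by direct substitution of the connection formulas (\ref{wkk:bescont1})--(\ref{wkk:bescont2}), followed by collapsing the answer with the elementary relation $H_k^{(1)}(s)+H_k^{(2)}(s)=2J_k(s)$. The essential point is that none of (\ref{wkk:bescont1})--(\ref{wkk:bescont2}) uses integrality of $k$: each is obtained by the term-by-term analytic continuation $z\mapsto ze^{\pm\pi i/2}$ of the standard series/integral representations of $I_k$, $K_k$, $J_k$, $H_k^{(1,2)}$ (see \cite{W}, \cite{BE}), with the branch of $z^k$ fixed as usual, and therefore holds for every real (indeed complex) order $k$. This is precisely what is needed to pass from the case of non-negative integer $k$ treated in \cite{G2} to arbitrary $k\in\RM$.

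First I would specialise (\ref{wkk:bescont1})--(\ref{wkk:bescont2}) to $\lambda=1$ and record, for $r,s>0$, that $I_k(-ir)=e^{-\pi i k/2}J_k(r)$, that $I_k(ir)=e^{-\pi i k/2}J_k(-r)=e^{\pi i k/2}J_k(r)$, that $K_k(-is)=\frac{\pi i}{2}e^{\pi i k/2}H_k^{(1)}(s)$, and that $K_k(is)=\frac{\pi i}{2}e^{\pi i k/2}H_k^{(1)}(-s)=-\frac{\pi i}{2}e^{-\pi i k/2}H_k^{(2)}(s)$, the second equality in each pair coming from $J_k(-r)=e^{\pi i k}J_k(r)$ and $H_k^{(1)}(-s)=-e^{-\pi i k}H_k^{(2)}(s)$, i.e.\ the last two lines of (\ref{wkk:bescont1})--(\ref{wkk:bescont2}).

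Next I would multiply out the two products on the left-hand side. In each of $I_k(-ir)K_k(-is)$ and $I_k(ir)K_k(is)$ the phase factors $e^{\pm\pi i k/2}$ cancel, giving respectively $\frac{\pi i}{2}J_k(r)H_k^{(1)}(s)$ and $-\frac{\pi i}{2}J_k(r)H_k^{(2)}(s)$. Subtracting these and using $H_k^{(1)}(s)+H_k^{(2)}(s)=2J_k(s)$ yields $\pi i\,J_k(r)J_k(s)$, which is exactly the asserted identity.

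I do not expect a genuine obstacle here. The only thing that requires care is justifying that the connection formulas hold verbatim for non-integer $k$ (tracking the branch of $z^k$ and using $Y_k=(J_k\cos k\pi-J_{-k})/\sin k\pi$, $H_k^{(1,2)}=J_k\pm iY_k$); but these are the standard identities $I_k(z)=e^{\mp k\pi i/2}J_k(ze^{\pm\pi i/2})$ and $K_k(z)=\pm\frac{\pi i}{2}e^{\pm k\pi i/2}H_k^{(1,2)}(ze^{\pm\pi i/2})$ (Watson \S3.61, \S3.7), valid for all orders, and the excerpt already lists them as known facts. Everything after that is the routine bookkeeping of phase factors indicated above.
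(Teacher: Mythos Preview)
Your proof is correct and takes essentially the same approach as the paper: both reduce the identity to standard connection formulas for Bessel functions valid for arbitrary order. The only difference is cosmetic --- you work directly with (\ref{wkk:bescont1})--(\ref{wkk:bescont2}) and finish via $H_k^{(1)}+H_k^{(2)}=2J_k$, whereas the paper instead invokes the continuation identities $I_k(-z)=e^{i\pi k}I_k(z)$ and $K_k(-z)=e^{-i\pi k}K_k(z)-i\pi I_k(z)$ from \cite{W} and only applies (\ref{wkk:bescont1}) at the final step.
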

\begin{proof}
Multiplying among themselves the following relations on Bessel functions with argument $-z$ given in \cite{W}
\begin{align*}
&I_k(-z)=e^{i\pi k} I_k(z) \\
&K_k(-z)=e^{-i\pi k} K_k(z) -i\pi I_k(z) 
\end{align*}
we will have
\begin{align*}
I_k(-ir)K_k(-is) - I_k(i r)K_k(is) = i\pi e^{i\pi k}I_k(-ia)I_k(-ib).
\end{align*}
Aplying (\ref{wkk:bescont1}) we get the statement of the lemma.

\end{proof}

From this lemma follows
\begin{align*}
G_{k,2}(-\lambda^2-i0,r)  - G_{k,2}(-\lambda^2+i0,r) = \pi i \int_{r_0}^\infty J_k(\lambda r) J_k(\lambda s) f(s) s ds,
\end{align*}
and

\begin{align*}&\hat \omega(-\lambda^2-i0,r) - \hat \omega(-\lambda^2+i0,r) \nonumber \\ 
&=-\left (G_{k,1}(-\lambda^2-i0,r)  - G_{k,1}(-\lambda^2+i0,r) \right ) \nonumber \\
&+ G_{k,2}(-\lambda^2-i0,r)  - G_{k,2}(-\lambda^2+i0,r)   
=\frac{\pi i}2 \int_{r_0}^\infty \Big ( \\ &- \frac{J_{k}(\lambda r_0)(J_k(\lambda r)+iY_k(\lambda r)) (J_k(\lambda s)+iY_k(\lambda s)) (J_{k}(\lambda r_0)-iY_{k}(\lambda r_0) )}{J_{k}(\lambda r_0)^2+Y_{k}(\lambda r_0)^2}\\
&- \frac{J_{k}(\lambda r_0)(J_k(\lambda r)-iY_k(\lambda r)) (J_k(\lambda s)-iY_k(\lambda s))(J_{k}(\lambda r_0)+iY_{k}(\lambda r_0))}{J_{k}(\lambda r_0)^2+Y_{k}(\lambda r_0)^2}
\\
&+2\frac {J_k(\lambda r) J_k(\lambda s) (J_{k}(\lambda r_0)^2+Y_{k}(\lambda r_0)^2)} {J_{k}(\lambda r_0)^2+Y_{k}(\lambda r_0)^2} \Big )  f(s) s ds.\\
\end{align*}

After a series of transformations we will have
\begin{align*}&\hat \omega(-\lambda^2-i0,r) - \hat \omega(-\lambda^2+i0,r)\\
=&\pi i \int_{r_0}^\infty  \frac{ \left (J_{k}(\lambda r_0) Y_k(\lambda r) - Y_{k}(\lambda r_0) J_k(\lambda r) \right )
 } {J_{k}(\lambda r_0)^2 + Y_{k}(\lambda r_0)^2} \\
&\times \left ( J_{k}(\lambda r_0) Y_k(\lambda s) - Y_{k}(\lambda r_0) J_k(\lambda s) \right )  f(s) s ds  
\end{align*}

Then from (\ref{wkk:wksol})
\begin{align}\label{wkk:sol}
&w(t,r) = 
\int_0^\infty  \frac{  \left (J_{k}(\lambda r_0) Y_k(\lambda r) - Y_{k}(\lambda r_0) J_k(\lambda r)  \right )e^{-\lambda^2 t}} 
{\sqrt{J_{k}(\lambda r_0)^2 + Y_{k}(\lambda r_0)^2}}  \\ 
&\times \int_{r_0}^\infty
 \frac{ \left (J_{k}(\lambda r_0) Y_k(\lambda s) - Y_{k}(\lambda r_0) J_k(\lambda s) \right )}{\sqrt{J_{k}(\lambda r_0)^2 + Y_{k}(\lambda r_0)^2}} f(s) s \ds \lambda \mathrm{d}\lambda, \nonumber
\end{align}
or
\begin{equation}\label{wkk:solw}
w(t,\cdot) = W^{-1}_{k,k}\left [ e^{-\lambda^2 t} W_{k,k}[f] \right ].
\end{equation}

\begin{thm} \label{wkk:thmheateq}
Let $f(r) \sqrt r \in L_1(r_0,\infty) \cap L_2(r_0,\infty)$ for $k \in \ZM$. Then the solution $w(t,r)\in C\left(\RM_+;L_2(r_0,\infty; r)\right )$ of (\ref{wkk:omegaeqpolar}), (\ref{wkk:omegaeqdirichlet}),(\ref{wkk:omegaeqinit}) is given by (\ref{wkk:solw}).
\end{thm}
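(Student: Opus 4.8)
The plan is to verify that the function $w(t,r)$ already constructed in formula (\ref{wkk:sol}), equivalently (\ref{wkk:solw}), is indeed a solution of the initial-boundary value problem (\ref{wkk:omegaeqpolar})--(\ref{wkk:omegaeqinit}) with the claimed regularity, and then to invoke uniqueness to conclude it is \emph{the} solution. Three things must be checked: (i) for each fixed $t>0$ the integral in (\ref{wkk:sol}) converges in $L_2(r_0,\infty;r)$ and depends continuously on $t$ including at $t=0$, where it returns $f$; (ii) $w$ solves the PDE (\ref{wkk:omegaeqpolar}) with the boundary condition (\ref{wkk:omegaeqdirichlet}); (iii) the solution in the stated class is unique.

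For (i), the key estimate is \lemref{wkk:lemest}, which gives $|W_{k,k}[f](\lambda)|\le C\lambda^{-1/2}\|f\sqrt r\|_{L_1}$ for $\lambda>1$; combined with the asymptotics (\ref{wkk:asymp1})--(\ref{wkk:asymp2}) and the bound (\ref{wkk:estJY}), the kernel of $W^{-1}_{k,k}$ is $O(\lambda^{-1/2})$ in the radial variable, so the Gaussian factor $e^{-\lambda^2 t}$ makes the $\lambda$-integral absolutely convergent for $t>0$ and allows differentiation under the integral sign in $t$ and in $r$, which yields (\ref{wkk:omegaeqpolar}) because each kernel function $J_k(\lambda r)Y_k(\lambda r_0)-Y_k(\lambda r)J_k(\lambda r_0)$ is annihilated by $\Delta_k+\lambda^2$; the boundary condition (\ref{wkk:omegaeqdirichlet}) is immediate since that kernel vanishes at $r=r_0$. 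For the continuity in $t$ up to $t=0$ and the recovery of the initial datum, I would use that $W_{k,k}$ is (by the Plancherel part of \thmref{wkk:thm}) an isometry from $L_2(r_0,\infty;r)$ onto $L_2(0,\infty;r)$, so (\ref{wkk:solw}) expresses $w(t,\cdot)$ as the image under the unitary map $W^{-1}_{k,k}$ of $e^{-\lambda^2 t}W_{k,k}[f]$; since $e^{-\lambda^2 t}W_{k,k}[f]\to W_{k,k}[f]$ in $L_2(0,\infty;r)$ as $t\to 0^+$ by dominated convergence, it follows that $w(t,\cdot)\to f$ in $L_2(r_0,\infty;r)$, giving both membership in $C(\RM_+;L_2(r_0,\infty;r))$ and (\ref{wkk:omegaeqinit}). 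Strictly speaking one should note this step uses \thmref{wkk:thm}; alternatively, since $w$ was \emph{derived} from the inverse Laplace transform of $\hat\omega$, which by (\ref{poison}) solves the resolvent equation with datum $f$, standard semigroup theory for the self-adjoint realization of $\Delta_k$ on $L_2(r_0,\infty;r)$ with Dirichlet condition at $r_0$ gives the same conclusion.

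For uniqueness (iii), the difference $v$ of two solutions in $C(\RM_+;L_2(r_0,\infty;r))$ solves the homogeneous problem with $v(0,\cdot)=0$; multiplying (\ref{wkk:omegaeqpolar}) by $\bar v\, r$, integrating over $(r_0,\infty)$, and using the Dirichlet condition together with decay at infinity gives $\frac{d}{dt}\|v(t,\cdot)\|_{L_2(r_0,\infty;r)}^2\le 0$, hence $v\equiv 0$. The main obstacle I expect is the passage to the limits $\varepsilon\to 0$, $\eta\to\infty$ in the contour deformation leading to (\ref{wkk:wksol}): one must justify that the contributions of the vertical segments at $\operatorname{Re}\tau=-\eta$ and of the small arcs around the branch point $\tau=0$ vanish, which requires uniform bounds on $\hat\omega(\tau,r)$ from (\ref{directsol}) via the asymptotics of $I_k,K_k$ for large and small $\sqrt\tau r$ — in particular checking that no pole of $1/K_k(\sqrt\tau r_0)$ lies on the deformed contour (true since $K_k$ has no zeros in the cut plane). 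This analytic bookkeeping, rather than the PDE verification itself, is the technical heart of the argument.
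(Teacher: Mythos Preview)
Your proposal is considerably more elaborate than the paper's proof, which occupies only a few lines: the paper argues that $\Delta_k$ generates a strongly continuous semigroup on $L_2(r_0,\infty;r)$ (by restricting the $2$D Dirichlet heat semigroup to the $k$-th angular mode via $\Delta[f(r)e^{ik\varphi}]=e^{ik\varphi}\Delta_k f(r)$), so a unique solution in $C(\RM_+;L_2)$ exists; then it invokes \lemref{wkk:lemest} to bound the inner integral in (\ref{wkk:sol}) by $C\lambda^{-1/2}$, whence the outer integral with the factor $e^{-\lambda^2 t}$ is well defined. That is the entire argument --- the identification of (\ref{wkk:solw}) with the semigroup solution is taken for granted from the resolvent derivation preceding the theorem.

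The one genuine gap in your proposal is the appeal to the Plancherel part of \thmref{wkk:thm} in step (i). In the paper's logical order, \thmref{wkk:thm} is proved \emph{after} \thmref{wkk:thmheateq} (indeed, its proof in \secref{wkk:proof} uses \thmref{wkk:thmheateq} to pass to the limit $t\to 0$), so invoking it here is circular. You do flag this and offer the semigroup alternative, which is exactly what the paper does; but then the direct verification you set up in (ii) and (iii), and your remarks on the contour-deformation bookkeeping, go well beyond what the paper actually supplies. Your energy-type uniqueness argument and the differentiation under the integral sign are sound additions that the paper omits; conversely, the paper's reduction to the full $2$D Laplacian is the cleanest way to get both existence and the $C_0$-semigroup property without touching the Weber--Orr isometry, and you should lead with that rather than the Plancherel route.
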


\begin{proof} 
$\Delta_k$ generates strongly continuous semi-group in $L_2(r_0,\infty; r)$ for boundary-value problem (\ref{wkk:omegaeqpolar}), (\ref{wkk:omegaeqdirichlet}),(\ref{wkk:omegaeqinit}). Indeed, Laplace operator $\Delta=\partial^2_{x_1^2}+\partial^2_{x_2^2}$  generates strongly continuous semi-group in $L_2(\{x\in \RM^2,|x|\geq r_0\})$ (see \cite{Yosida}). Then from relation
$\Delta [f(r)e^{ik\varphi}] = e^{ik\varphi} \Delta_k f(r)$ operator $\Delta_k$ does the same in $L_2(r_0,\infty; r)$ and solution of boundary-value problem exists. Last integral in (\ref{wkk:sol}) by lemma \ref{wkk:lemest} is bounded by $1/\sqrt \lambda$. Then first integral in  (\ref{wkk:sol}) involving $e^{-\lambda^2 t}$ is well defined and the theorem is proved. 
\end{proof}

\subsection{Proof of the inversion theorem} \label{wkk:proof}

We prove theorem \ref{wkk:thm} using the initial-boundary value problem (\ref{wkk:omegaeqpolar}), (\ref{wkk:omegaeqdirichlet}),(\ref{wkk:omegaeqinit}) solved in previous subsection. All we need is to prove the validity of the limit transition $t\to 0$ in (\ref{wkk:solw}).

\begin{proof}
Let $w(t,r)$ be the solution of (\ref{wkk:omegaeqpolar}), (\ref{wkk:omegaeqdirichlet}) with initial datum $f(r)$. Since $\Delta_k$ generates strongly continuous semi-group in $L_2(r_0,\infty; r)$ then
$$
\|w(t,\cdot) - f(\cdot)\|_{L_2(r_0,\infty; r)} \to 0,~t\to 0.
$$ 

Now we prove weak convergence in $L_2(r_0,\infty; r)$
$$
w(t,\cdot) \rightharpoondown W^{-1}_{k,k}\left [W_{k,k} [f] \right ]  ,~t\to 0.
$$

It is enough to prove the weak convergence for $g(r)$ from the dense set of functions from $C_0^\infty(r_0,\infty)$. From Fubini’s theorem
$$
\left (W^{-1}_{k,k}\left [ e^{-\lambda^2 t} W_{k,k}[f] \right ], g(\cdot) \right ) = \\
\left ( e^{-\lambda^2 t}W_{k,k}[f], W_{k,k}[g] \right )=
\left ( e^{-\lambda^2 t}\hat f(\lambda), \hat g (\lambda) \right ),
$$
where $\hat f(\lambda) = W_{k,k}[f](\lambda)$, 
$\hat g(\lambda) = W_{k,k}[g] (\lambda)$.

And from lemma (\ref{wkk:lemest}) follows
\begin{align} \label{wkk:scalarfg}
\left | \left (e^{-\lambda^2 t} \hat f(\lambda), \hat g (\lambda) \right ) \right | = \left | \int_0^\infty e^{-\lambda^2 t} \hat f(\lambda) \hat g (\lambda) \lambda \dlambda \right |\leq C \|f\|_{L_1}
\int_0^\infty e^{-\lambda^2 t} |\hat g (\lambda)| \sqrt \lambda \dlambda.
\end{align}


Since $g(r)$ has compact support then for large $\lambda$
$$
\hat g(\lambda)=O\left (\frac 1{\lambda^2}  \right )
$$
and integral tails in right side of (\ref{wkk:scalarfg}) of the form
$$
\int_L^\infty \frac{e^{-\lambda^2 t}}{\lambda^\frac 32}\dlambda
$$
converge uniformly to zero when $L\to \infty$  by $t$, and integrand function ${e^{-\lambda^2 t}}/{\lambda^\frac 32}$ converges uniformly on any compact subset of $\RM_+$. So we can pass to limit $t\to 0$ in (\ref{wkk:scalarfg}). Then $w(t,\cdot) \rightharpoondown W^{-1}_{k,k}\left [W_{k,k} [f] \right ]$ and combined with $w(t,\cdot) \rightarrow f(\cdot)$ formula (\ref{wkk:inversion_formula}) holds almost everywhere.

\end{proof}

\section{Weber-Orr transform $W_{k,k\pm 1}[\cdot]$ }

In this section we provide $\Delta_k$ with such Robin-type boundary condition, that the kernel of $\Delta_k$ becomes nontrivial.
The resolvent $R(\tau) = (\Delta_k-\tau I)^{-1}$ is branching function with jump discontinuity along continuous spectrum $\sigma_c(\Delta_k) = \RM_-$. Eigen value $\tau=0$ will generate term like $R_{-1}/\tau$ in expansion of $R(\tau)$, where $R_{-1}$ is the projector onto eigen subspace $\ker(\Delta_k)$. This subspace combined with functions from continuous part of the spectrum $\sigma_c(\Delta_k)$ constitute full system in $L_2(r_0,\infty; r)$.

Transform $W_{k,k\pm 1}[\cdot]$ is defined for $k\in \RM$, $r_0>0$ as
\begin{equation}\label{int:weberorr}
W_{k,k\pm 1}[f](\lambda) = \int_{r_0}^\infty \frac{J_{k}(\lambda s)Y_{k\pm 1}(\lambda r_0) - Y_{k}(\lambda s)J_{k\pm 1}(\lambda r_0)}
{\sqrt{J_{k\pm 1}^2(\lambda r_0) + Y_{k\pm 1}^2(\lambda r_0)}}   f(s) s \ds.
\end{equation}

We define inverse operator
\begin{equation}\label{int:weberorrinv}
W^{-1}_{k,k\pm 1}[\hat f](r) = \int_{0}^\infty \frac  {J_{k}(\lambda r)Y_{k\pm 1}(\lambda r_0) - Y_{k}(\lambda s)J_{k\pm 1}(\lambda r_0)}{\sqrt{J_{k\pm 1}^2(\lambda r_0) + Y_{k\pm 1}^2(\lambda r_0)}} \hat f (\lambda) \lambda \dlambda.
\end{equation}


As we will see $\ker(W^{-1}_{k,k\pm 1})\neq \{0\}$ and Plancherel identity will be supplemented by additional term  as in (\ref{intro:PP}) and inversion formula like (\ref{wkk:inversion_formula}) for $W_{k,k\pm 1}[\cdot]$ lose its validity.

When transform $W_{k,k}[\cdot]$ helps to find the solution of heat equation with zero Dirichlet condition by formula (\ref{wkk:solw}), $W_{k,k \pm 1}[\cdot]$ solves heat equation 
\begin{equation} \label{wkk1:omegaeqpolar}
\frac{\partial w(t,r)}{\partial t} - \Delta_k w(t,r) = 0,
\end{equation}
with Robin-type boundary
\begin{equation}\label{wkk1:robin_bound}
r_0\frac{\partial w(t,r)}{\partial r}\Big|_{r=r_0} \mp k w(t,r_0) = 0,~k \in \RM.
\end{equation}

Supply this boundary problem with initial datum \begin{equation} \label{wkk1:omegaeqinit}
w(0,r) = f(r).
\end{equation}

If we had inversion formula like (\ref{wkk:inversion_formula}) then the solution of (\ref{wkk1:omegaeqpolar}), (\ref{wkk1:robin_bound}), (\ref{wkk1:omegaeqinit}) would be given by the following equality
$$
w(t,\cdot) = W^{-1}_{k,k\pm 1}\left [ e^{-\lambda^2 t} W_{k,k\pm 1}[f] \right ].
$$

But since
$$
\Delta_k[1/r^k]=0,~\Delta_k[r^k]=0
$$
then for $k>1$ the function $1/r^k$ $\in$ $\ker(\Delta_k)\subset L_2(r_0,\infty; r)$, and for $k<-1$ the function $r^k$ $\in$ $\ker(\Delta_k) \subset L_2(r_0,\infty; r)$. Both $1/r^k$ and $r^k$
 are the stationary solutions of (\ref{wkk1:omegaeqpolar}). $1/r^k$ satisfies boundary condition with sign "+" in (\ref{wkk1:robin_bound}) when $r^k$ does the same with "-". And so, these functions must be in kernel of $W_{k,k\pm 1}$ which is proved in the following lemma. 

\begin{lem} \label{wkk1:lem1}For $k>1$ function $1/r^k \in \ker(W_{k,k-1}) \subset L_2(r_0,\infty; r)$, and
for $k<-1$ $r^k \in \ker(W_{k,k-1}) \subset L_2(r_0,\infty; r)$. For $k<-1$ function $r^k \in \ker(W_{k,k+1}) \subset L_2(r_0,\infty; r)$.
\end{lem}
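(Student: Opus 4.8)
The plan is to verify directly that the relevant power functions are annihilated by the Weber--Orr kernel, using the classical integral formulas for $\int_{r_0}^\infty r^{\pm k} J_\mu(\lambda r)\,r\,dr$ together with the contiguous relations relating $J_{k}$, $J_{k\pm 1}$ and the first derivative. The membership in $L_2(r_0,\infty;r)$ is immediate: for $k>1$ we have $\int_{r_0}^\infty r^{-2k} r\,dr = \int_{r_0}^\infty r^{1-2k}\,dr<\infty$ since $1-2k<-1$, and symmetrically for $k<-1$ with $r^k$, so the only real content is the vanishing of the transform.

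First I would observe that $1/r^k$ is, for $\tau=0$, the solution of the homogeneous Bessel equation $\Delta_k u=0$ that decays at infinity, and it satisfies $r_0 u'(r_0)+k u(r_0)=0$, i.e. the "$+$" Robin condition; likewise $r^k$ for $k<-1$ decays at infinity and satisfies the "$-$" condition. So the statement is really the boundary-value-problem assertion: the generalised eigenfunction of $\Delta_k$ at $\tau=0$ with the given Robin condition is a genuine $L_2$ eigenfunction, hence orthogonal to the continuous-spectrum eigenfunctions that make up the kernel of $W_{k,k\pm1}$. To turn this heuristic into a computation, recall that the numerator of the $W_{k,k-1}$ kernel is
\begin{equation*}
\Psi_\lambda(s) := J_k(\lambda s)Y_{k-1}(\lambda r_0)-Y_k(\lambda s)J_{k-1}(\lambda r_0),
\end{equation*}
and that $Z_{k-1}(\lambda r_0)$ is (up to a factor $\lambda$) proportional to $\tfrac{\partial}{\partial r}\big[r^{?}Z_k(\lambda r)\big]$-type expressions via the standard identities $Z_{k-1}(z)=Z_k'(z)+\tfrac{k}{z}Z_k(z)$ for $Z\in\{J,Y\}$. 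Concretely, $\lambda r_0\,Z_{k-1}(\lambda r_0) = \big(r\tfrac{d}{dr}+k\big)Z_k(\lambda r)\big|_{r=r_0}$, so the combination $Y_{k-1}(\lambda r_0)J_k(\lambda s)-J_{k-1}(\lambda r_0)Y_k(\lambda s)$ is exactly the Wronskian-type expression that measures how far $Z_k(\lambda s)$ is from satisfying the Robin condition at $r_0$.

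The core step is then the integral evaluation. I would write $W_{k,k-1}[r^{-k}](\lambda)$, up to the harmless normalising denominator, as
\begin{equation*}
\int_{r_0}^\infty \big(J_k(\lambda s)Y_{k-1}(\lambda r_0)-Y_k(\lambda s)J_{k-1}(\lambda r_0)\big) s^{1-k}\,ds,
\end{equation*}
and use the known primitive: since $\big(\tfrac{d^2}{ds^2}+\tfrac1s\tfrac{d}{ds}-\tfrac{k^2}{s^2}\big)[s^{-k}]=0$ and $\big(\text{same}\big)[Z_k(\lambda s)]=-\lambda^2 Z_k(\lambda s)$, Green's/Lagrange's identity gives
\begin{equation*}
\lambda^2\int_{r_0}^R Z_k(\lambda s)s^{-k}\,s\,ds = \Big[s\big(Z_k(\lambda s)(s^{-k})'-s^{-k}Z_k'(\lambda s)\lambda\big)\Big]_{r_0}^{R}.
\end{equation*}
The boundary term at $s=r_0$ is, after inserting $(s^{-k})'=-k s^{-k-1}$, precisely a multiple of $k Z_k(\lambda r_0)+\lambda r_0 Z_k'(\lambda r_0) = \lambda r_0 Z_{k-1}(\lambda r_0)$ — wait, sign: it is $\mp k Z_k(\lambda r_0)-\lambda r_0 Z_k'(\lambda r_0)$, which is proportional to $Z_{k+1}$ or $Z_{k-1}$ according to the sign, matching the "$k-1$" index for the "$+$" Robin condition and the "$k+1$" index for the "$-$" condition. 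Taking the particular linear combination $Z_k = J_k Y_{k-1}(\lambda r_0) - Y_k J_{k-1}(\lambda r_0)$ makes this boundary term at $r_0$ vanish identically. The boundary term at $s=R\to\infty$ vanishes because $s^{-k}\cdot s\to 0$ for $k>1$ and the Bessel factors are $O(s^{-1/2})$; the oscillatory part needs a brief Abel-summation or dominated-convergence argument (or one simply notes the integral converges absolutely, being $O(\int R^{1/2-k}\,dR)$, so the limit of the partial integrals exists and equals the improper integral). Dividing by $\lambda^2\neq0$ then forces the integral to be $0$ for every $\lambda>0$, i.e. $W_{k,k-1}[r^{-k}]\equiv0$. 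The cases $r^k\in\ker(W_{k,k-1})$ for $k<-1$ and $r^k\in\ker(W_{k,k+1})$ for $k<-1$ are handled by the identical computation with $s^{-k}$ replaced by $s^{k}$ and the index shifted using $Z_{k+1}(z)=\tfrac{k}{z}Z_k(z)-Z_k'(z)$; one checks which sign in the Robin condition each power satisfies and matches it to the $k\pm1$ subscript in the transform's $r_0$-dependent coefficients.

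The main obstacle I anticipate is not the algebra of the boundary term — that is a mechanical application of the contiguous relations — but justifying the convergence and the vanishing of the boundary contribution at infinity rigorously, since the integrand is only conditionally nice once the $s^{1-k}$ weight multiplies an $O(s^{-1/2})$ oscillating Bessel factor; for $k>1$ this product is absolutely integrable so a dominated-convergence argument closes it cleanly, but the edge cases $k\to1^+$ (not in range here) and the need to keep the normalising denominator $\sqrt{J_{k\pm1}^2+Y_{k\pm1}^2}$ bounded away from zero (which it is, by the standard asymptotics used already in \lemref{wkk:lemest}) deserve an explicit line. A secondary bookkeeping hazard is getting all the signs in \eqref{wkk1:robin_bound} consistent with which power lies in which kernel; I would fix this once by substituting $r^{-k}$ and $r^k$ into $r_0 w'(r_0)\mp k w(r_0)$ at the very start and then never revisit it.
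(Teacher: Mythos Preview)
Your proposal is correct and amounts to the same computation the paper does, just packaged differently. The paper quotes the classical antiderivative formulas $\int s^{1-k}Z_k(s)\,ds=-s^{1-k}Z_{k-1}(s)$ and $\int s^{1+k}Z_k(s)\,ds=s^{1+k}Z_{k+1}(s)$ for $Z\in\{J,Y\}$ from \cite{BE} and then evaluates at the endpoints, observing the cancellation $Y_{k-1}(\lambda r_0)J_{k-1}(\lambda r_0)-J_{k-1}(\lambda r_0)Y_{k-1}(\lambda r_0)=0$; your Green--Lagrange argument between $s^{-k}$ and $Z_k(\lambda s)$ simply rederives that antiderivative (the boundary term you obtain, $-\lambda s^{1-k}Z_{k-1}(\lambda s)$, is exactly the tabulated primitive), so the two routes coincide term by term. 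Your extra care about the vanishing of the $R\to\infty$ contribution and the boundedness of the normalising denominator is not in the paper's proof but is welcome, since for $k>1$ the integrand $s^{1-k}Z_k(\lambda s)=O(s^{1/2-k})$ is absolutely integrable and the limit is unproblematic.
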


\begin{proof}For $k>1$ function $1/r^k \in L_2(r_0,\infty; r)$,
and for $k<-1$ function $r^k \in L_2(r_0,\infty; r)$. From relations (see \cite{BE})
\begin{align*}
&\int s^{-k+1}J_k(s)ds=s^{-k+1}J_{k-1}(s),\\
&\int s^{-k+1}Y_k(s)ds=s^{-k+1}Y_{k-1}(s),\\
&\int s^{k+1}J_k(s)ds=s^{k+1}J_{k+1}(s),\\
&\int s^{k+1}Y_k(s)ds=s^{k+1}Y_{k+1}(s)
\end{align*}
follows
\begin{align*}
&W_{k,k-1}[1/r^k](\lambda) = \\ & \int_{r_0}^\infty \left ( J_{k}(\lambda s)Y_{k-1}(\lambda r_0) - Y_{k}(\lambda s)J_{k-1}(\lambda r_0) \right ) \left ( 1/s^k\right ) s\ds = 0,~k>1, \\
&W_{k,k+1}[r^k](\lambda) = \\ & \int_{r_0}^\infty \left ( J_{k}(\lambda s)Y_{k-1}(\lambda r_0) - Y_{k}(\lambda s)J_{k-1}(\lambda r_0) \right ) \left ( s^k\right ) s\ds = 0,~k<-1.
\end{align*}
\end{proof}

The main result of this section is the following theorem. It involves inversion formulas and  Plancherel-Parseval type identity like (\ref{intro:PP}).

\begin{thm}\label{wkk1:thm}Let $f(r) \sqrt r \in L_1(r_0,\infty)\cap L_2(r_0,\infty)$, $r_0>0$, $k \in \RM$. Then the associated Weber-Orr transforms (\ref{int:weberorr}), (\ref{int:weberorrinv}) satisfy  almost everywhere 
\begin{align*}
f(r) &= W^{-1}_{k,k-1}\left [W_{k,k-1} [f] \right ](r), k\leq 1, \\
f(r) &= W^{-1}_{k,k-1}\left [W_{k,k-1} [f] \right ](r) +  \frac {2(k-1) r_0^{2(k-1)}} {r^k} \int_{r_0}^\infty s^{-k+1} f(s) \ds,k>1,\\
f(r) &= W^{-1}_{k,k+1}\left [W_{k,k+1} [f] \right ](r), k\geq -1, \\
f(r) &= W^{-1}_{k,k+1}\left [W_{k,k+1} [f] \right ](r) -  \frac {2(k+1) r^{k}} {r_0^{2(k+1)}} \int_{r_0}^\infty s^{-k+1} f(s) \ds,k<-1.
\end{align*}
and the following Plancherel-Parseval equity holds:
\begin{align*}
\|f\|_{L_2(r_0,\infty; r)}^2 &= \| W_{k,k-1}[f] \|_{L_2(0,\infty; r) }^2 , k\leq 1, \\
\|f\|_{L_2(0,\infty; r) }^2 &= \| W_{k,k-1}[f] \|_{L_2(0,\infty; r) } ^2 \\
&+ {2(k-1)} r_0^{2(k-1)} \left (f,1/r^k \right )^2_{L_2(r_0,\infty; r)}, k> 1,\\
\|f\|_{L_2(r_0,\infty; r)}^2 &= \| W_{k,k+1}[f] \|_{L_2(0,\infty; r) }^2 , k\geq -1, \\
\|f\|_{L_2(0,\infty; r) }^2 &= \| W_{k,k-1}[f] \|_{L_2(0,\infty; r) } ^2 \\
&- \frac{2(k+1)}{r_0^{2(k+1)}} \left (f,r^k \right )^2_{L_2(r_0,\infty; r)}, k< -1.
\end{align*}
\end{thm}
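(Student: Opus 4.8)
The plan is to mirror the strategy of \thmref{wkk:thm}, now applied to the Robin-type boundary value problem (\ref{wkk1:omegaeqpolar}), (\ref{wkk1:robin_bound}), (\ref{wkk1:omegaeqinit}). First I would carry out the Laplace transform in $t$, reducing the problem to the non-homogeneous modified Bessel equation (\ref{besselrhs}), and solve it by variation of parameters exactly as before, this time imposing the Robin condition $r_0 \hat\omega'(r_0) \mp k\hat\omega(r_0)=0$ in place of the Dirichlet condition. The coefficient of $I_k(\sqrt\tau r)$ will now be determined by a denominator of the form $r_0\sqrt\tau K_k'(\sqrt\tau r_0)\mp k K_k(\sqrt\tau r_0)$; using the recurrence relations for $K_k$ one rewrites this as a multiple of $K_{k\mp1}(\sqrt\tau r_0)$, which is precisely what produces the indices $k,k\pm1$ in the final transform. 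Then I would apply the inverse Laplace transform, deform the Bromwich contour $\Gamma_\eta$ to the Hankel-type contour $\Gamma_{-\eta,\varepsilon}^-$, and let $\varepsilon\to0$, $\eta\to\infty$ as in (\ref{wkk:wksol}). The branch-cut jump of $G_{k,2}$ is handled verbatim by \lemref{wkk:besselrelationlem}, and the jump of $G_{k,1}$ is computed via the continuation formulas (\ref{wkk:bescont1})--(\ref{wkk:bescont2}), yielding the representation
\begin{equation*}
w(t,\cdot) = W^{-1}_{k,k\pm1}\left[e^{-\lambda^2 t} W_{k,k\pm1}[f]\right].
\end{equation*}

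The decisive new feature, and the main obstacle, is the behaviour of the resolvent near $\tau=0$. When $k\le 1$ (for the $k-1$ case) the function $K_{k-1}(\sqrt\tau r_0)$ does not vanish at $\tau=0$ faster than a simple pole of $1/K_{k-1}$ can tolerate, so the contour deformation picks up no residue and the argument closes as in \S\ref{wkk:proof}. But when $k>1$, $K_{k-1}(z)\sim \Gamma(k-1)2^{k-2} z^{-(k-1)}$ as $z\to0$, so $1/K_{k-1}(\sqrt\tau r_0)$ has a zero of order $k-1$ — however the numerator $I_{k}(\sqrt\tau r_0)$ also vanishes, and the ratio $I_k(\sqrt\tau r_0)/K_{k-1}(\sqrt\tau r_0)$ behaves like $c\,\tau$ near $\tau=0$ after accounting for the $\sqrt\tau$ factors in $\hat\omega$, so that $\hat\omega(\tau,r)$ has a genuine simple pole $R_{-1}/\tau$. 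Here the hard part will be to compute the residue $R_{-1}$ explicitly: one must expand $K_k$, $K_{k-1}$, $I_k$ about zero, keep track of the $\sqrt\tau$ arising from the contour variable, and identify the leading term of $\int_{r_0}^\infty f(s)K_k(\sqrt\tau s)s\,ds$. The residue should come out proportional to $r^{-k}\int_{r_0}^\infty s^{-k+1}f(s)\,ds$, matching the eigenfunction $1/r^k\in\ker(\Delta_k)$ found before \lemref{wkk1:lem1}; pinning down the constant $2(k-1)r_0^{2(k-1)}$ is the one genuinely delicate computation.

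Once the residue is in hand, the contour shift gives
\begin{equation*}
w(t,r) = W^{-1}_{k,k-1}\left[e^{-\lambda^2 t}W_{k,k-1}[f]\right](r) + \frac{2(k-1)r_0^{2(k-1)}}{r^k}\int_{r_0}^\infty s^{-k+1}f(s)\,ds
\end{equation*}
for $k>1$ (the term being $t$-independent because $e^{\tau t}|_{\tau=0}=1$), and similarly with $r^k$ for $k<-1$; the parallel computation for the $k+1$ transform is symmetric under $k\mapsto -k$. I would then justify the limit $t\to0$ exactly as in \S\ref{wkk:proof}: $\Delta_k$ with the Robin boundary still generates a strongly continuous semigroup on $L_2(r_0,\infty;r)$ (again by reduction from the two-dimensional Laplacian with the corresponding oblique boundary condition, or directly via the sesquilinear form), giving strong convergence $w(t,\cdot)\to f$; weak convergence of the right-hand side to $W^{-1}_{k,k\pm1}[W_{k,k\pm1}[f]]$ plus the residue term follows from \lemref{wkk:lemest} and the compact-support test-function argument, since the extra term is already independent of $t$. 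Equating the two limits yields the four inversion identities almost everywhere.

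Finally, the Plancherel–Parseval identities follow by the standard bilinear-to-quadratic route. Pairing the inversion formula with $f$ itself in $L_2(r_0,\infty;r)$ and using Fubini together with $(W^{-1}_{k,k\pm1}[\hat f],g)=(\hat f, W_{k,k\pm1}[g])$ gives $\|f\|^2 = \|W_{k,k\pm1}[f]\|^2_{L_2(0,\infty;r)}$ plus the inner-product correction term; since $1/r^k$ (resp. $r^k$) is orthogonal to everything in the range of $W^{-1}_{k,k\pm1}$ — indeed it lies in $\ker(W_{k,k\pm1})$ by \lemref{wkk1:lem1} — the cross terms drop and one is left with $(f,1/r^k)^2_{L_2(r_0,\infty;r)}$ times the explicit constant, which is exactly the asserted $R_{-1}$-projector contribution. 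The only subtlety here is density/integrability bookkeeping to legitimately extend from $C_0^\infty$ to all $f$ with $f\sqrt r\in L_1\cap L_2$, which \lemref{wkk:lemest} again supplies.
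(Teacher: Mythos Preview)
Your proposal follows essentially the same route as the paper: reduce to the resolvent of $\Delta_k$ with the Robin boundary via the Laplace transform, compute the branch-cut jump using (\ref{wkk:bescont1})--(\ref{wkk:bescont2}) and \lemref{wkk:besselrelationlem}, isolate the residue of $\hat\omega$ at $\tau=0$ when $|k|>1$, and then pass to $t\to0$ by the semigroup/weak-convergence argument of \S\ref{wkk:proof}. One small correction: after imposing the Robin condition the numerator in $G_{k,1}$ carries $I_{k-1}(\sqrt\tau r_0)$, not $I_k(\sqrt\tau r_0)$ (see (\ref{wkk1:omegasol})), which is exactly what makes the small-$\tau$ expansion and the constant $2(k-1)r_0^{2(k-1)}$ drop out cleanly; also note that the paper itself does not write out the Plancherel--Parseval step, so your bilinear-pairing argument is a reasonable completion rather than a deviation.
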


From this theorem  immediately follows
\begin{cor}[Bessel-type inequality]For $f(r)$, $f(r) \sqrt r \in L_1(r_0,\infty)\cap L_2(r_0,\infty)$, $k \in \RM$, $r_0>0$ Bessel-type inequality holds:
$$
\| W_{k,k\pm 1}[f] \|_{L_2(0,\infty; r) }^2 \leq \|f\|_{L_2(r_0,\infty; r)}^2. 
$$
\end{cor}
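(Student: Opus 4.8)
The plan is to mimic the strategy of \thmref{wkk:thm}: realise $W^{-1}_{k,k\pm1}[e^{-\lambda^2 t}W_{k,k\pm1}[f]]$ as the solution $w(t,r)$ of the heat problem (\ref{wkk1:omegaeqpolar}), (\ref{wkk1:robin_bound}), (\ref{wkk1:omegaeqinit}), computed via the Laplace transform and the contour-deformation argument, and then take $t\to0$. First I would solve the Poisson equation (\ref{poison}) with the Robin condition (\ref{wkk1:robin_bound}) instead of Dirichlet: the general solution is again a combination of $I_k(\sqrt\tau r)$, $K_k(\sqrt\tau r)$ plus the same particular solution built from the Wronskian, but now the coefficient of the growing part $I_k(\sqrt\tau r)$ is fixed by the boundary functional $\tau\mapsto \tau^{1/2}r_0 K_k'(\sqrt\tau r_0)\mp k K_k(\sqrt\tau r_0)$. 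The crucial structural fact — which I expect needs the Bessel recurrences $z K_k'(z)\pm k K_k(z)=\mp z K_{k\mp1}(z)$ and likewise for $I_k$ — is that this denominator is, up to scalar multiples, $K_{k\mp1}(\sqrt\tau r_0)$, so that $\wh\omega(\tau,r)$ has exactly the same shape as (\ref{directsol}) but with the index $k$ appearing in the $r_0$-slots replaced by $k\mp1$. Granting that, the whole contour computation from (\ref{wkk:wksol}) through (\ref{wkk:sol}) goes through verbatim with the obvious index shift, giving $w(t,\cdot)=W^{-1}_{k,k\pm1}[e^{-\lambda^2t}W_{k,k\pm1}[f]]$ for the continuous-spectrum part.

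The genuinely new ingredient is the pole of the resolvent at $\tau=0$. Unlike the Dirichlet case, the boundary functional now vanishes at $\tau=0$ (this is exactly the statement that $1/r^k$, resp.\ $r^k$, lies in $\ker\Delta_k$ and satisfies the corresponding Robin condition, as noted before \lemref{wkk1:lem1}), so $\wh\omega(\tau,r)$ acquires a simple pole $R_{-1}/\tau$ there whenever $k>1$ (resp.\ $k<-1$). When we deform the contour $\Gamma_\eta$ onto $\Gamma^-_{-\eta,\varepsilon}$ and shrink the little circle of radius $\varepsilon$ around the origin, this pole contributes a residue; I would compute it by expanding $I_k(\sqrt\tau r_0)$, $K_k(\sqrt\tau r_0)$ and $K_{k\mp1}(\sqrt\tau r_0)$ to leading order in $\tau$ (using $K_\nu(z)\sim\tfrac12\Gamma(\nu)(z/2)^{-\nu}$, $I_\nu(z)\sim(z/2)^\nu/\Gamma(\nu+1)$), which isolates the projector $R_{-1}$ onto $\mathrm{span}\{1/r^k\}$, explicitly $R_{-1}f=c_k\,r^{-k}\int_{r_0}^\infty s^{-k+1}f(s)\,ds$ with $c_k$ forced to be $2(k-1)r_0^{2(k-1)}$ by the normalisation $\|1/r^k\|^2_{L_2(r_0,\infty;r)}=r_0^{2(1-k)}/(2(k-1))$. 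Thus $w(t,r)=W^{-1}_{k,k\pm1}[e^{-\lambda^2t}W_{k,k\pm1}[f]](r)+ (\text{the stationary residue term})$, and since the residue term is $t$-independent, letting $t\to0$ exactly as in \secref{wkk:proof} — strong $L_2$ convergence $w(t,\cdot)\to f$ from the semigroup, weak convergence of the transform part via \lemref{wkk:lemest} and the compact-support tail estimate — yields the four inversion identities.

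For the Plancherel--Parseval equalities I would argue as follows. On the continuous-spectrum part the same computation that proves $W^{-1}_{k,k\pm1}W_{k,k\pm1}=I$ on the orthogonal complement of the kernel shows that $W_{k,k\pm1}$ is an isometry there; concretely, writing $f=P_0 f + (I-P_0)f$ with $P_0$ the $L_2(r_0,\infty;r)$-orthogonal projection onto $\ker\Delta_k$, one has $W_{k,k\pm1}[P_0f]=0$ by \lemref{wkk1:lem1} and $W^{-1}_{k,k\pm1}W_{k,k\pm1}[f]=(I-P_0)f$ by the inversion formula just proved, so polarising the inversion identity against $f$ in $L_2(r_0,\infty;r)$ and using Fubini (as in the $(W^{-1}[\cdots],g)$ computation of \secref{wkk:proof}) gives $\|f\|^2=\|W_{k,k\pm1}[f]\|^2+\|P_0f\|^2$; finally $\|P_0f\|^2=(f,1/r^k)^2/\|1/r^k\|^2=2(k-1)r_0^{2(k-1)}(f,1/r^k)^2$ (resp.\ the $r^k$ version), which is precisely the stated extra term, while for $k\le1$ (resp.\ $k\ge-1$) $P_0=0$ and one recovers the clean Plancherel identity. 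The Bessel-type inequality of the corollary is then immediate from dropping the non-negative projector term.

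I expect the main obstacle to be the identification of the Robin boundary functional with $K_{k\mp1}(\sqrt\tau r_0)$ and, relatedly, the careful extraction of the $\tau\to0$ residue: one must verify the sign of $\mp k$ matches the correct index shift $k\mp1$, confirm that the pole is simple and that no spurious contributions survive the $\varepsilon\to0$ limit along the two horizontal segments of $\Gamma^-_{-\eta,\varepsilon}$, and pin down the constant $2(k-1)r_0^{2(k-1)}$ consistently between the residue and the normalisation of the eigenfunction. A secondary technical point is that the semigroup-generation argument of \thmref{wkk:thmheateq} must be re-run for the Robin problem — $\Delta_k$ with boundary (\ref{wkk1:robin_bound}) is still self-adjoint and bounded above on $L_2(r_0,\infty;r)$, so it generates a strongly continuous (analytic) semigroup, but one should note that the realisation is no longer the $k$-th Fourier mode of the plane Laplacian on an exterior domain, so the reduction used there needs a direct quadratic-form argument instead.
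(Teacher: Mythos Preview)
Your proposal is correct and matches the paper's approach: the corollary is stated there as an immediate consequence of the Plancherel--Parseval identity of \thmref{wkk1:thm}, obtained by dropping the non-negative projector term, exactly as you do in your final sentence. Your extended sketch of \thmref{wkk1:thm} itself (Robin resolvent via the Bessel recurrences giving $K_{k\mp1}(\sqrt\tau r_0)$ in the denominator, contour deformation, residue at $\tau=0$ producing the projector onto $\ker\Delta_k$, and the $t\to0$ limit argument) also tracks the paper's derivation closely, so you have re-derived more than was strictly needed for the corollary but nothing is wrong.
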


\subsection{Initial-boundary value problem}

Due to symmetry of transforms $W_{k,k-1}$, $W_{k,k+1}$ further study will be carried out only for $W_{k,k-1}$ with boundary
\begin{equation}\label{wkk1:robin_bound1}
r_0\frac{\partial w(t,r)}{\partial r}\Big|_{r=r_0} + k w(t,r_0) = 0,~k \in \RM.
\end{equation}

In this section we will solve the initial boundary value problem (\ref{wkk1:omegaeqpolar}), (\ref{wkk1:omegaeqinit}),  (\ref{wkk1:robin_bound1}) reducing it to elliptic equation with parameter $\tau \in \CM$.

Laplace transform
$$
 \hat \omega(\tau,r)=\int_0^\infty e^{-\tau t}w(t,r) d\tau
$$
reduces (\ref{wkk1:omegaeqpolar}) to 
\begin{equation}\label{wkk1:poison}
\Delta_k \hat \omega - \tau \hat \omega  = - f(r),
\end{equation}
or
\begin{equation} \label{wkk1:besselrhs}
 \frac{\partial^2 \hat \omega}{\partial r^2} + \frac1r \frac{\partial \hat \omega}{\partial r} - \Big(\frac{k^2}{r^2} + \tau \Big) \hat \omega = -f(r).
\end{equation}
 
Particular solution of this equation was established in previous section:
$$\hat w_p(\tau, r) = K_k(\sqrt{\tau}r)\int_{r_0}^r f(s) I_k(\sqrt{\tau}s) s ds \nonumber \\ + 
I_k(\sqrt{\tau}r)\int_r^{\infty} f(s) K_k(\sqrt{\tau}s) s ds. $$

Since
$$
\frac{\hat w_p}{\partial r}(\tau, r_0) = \sqrt \tau I_k'(\sqrt \tau r_0) \int_{r_0}^\infty f(s) K_k(\sqrt{\tau}s) s ds,
$$
then
\begin{align*}
r_0{\frac{\partial \hat w_p}{\partial r }} (\tau,r_0) + k \hat w_p(\tau,r_0)  =  \sqrt \tau r_0 I_k'(\sqrt \tau r_0) \int_{r_0}^\infty f(s) K_k(\sqrt{\tau}s) s ds \\ + k I_k(\sqrt{\tau}r_0)
 \nonumber  \int_{r_0}^\infty f(s) K_k(\sqrt{\tau}s) s ds.
\end{align*}

From differentiation relation on Bessel functions 
\begin{eqnarray*}
I_{k-1}(r)={\frac {k }{r}}I_{k }(r)+{\frac {dI_{k }}{dr}}(r) 
\end{eqnarray*}
we will have
$$
\sqrt \tau r_0 I_k'(\sqrt \tau r_0) + k I_k(\sqrt{\tau}r_0) = \sqrt \tau r_0 I_{k-1}(\sqrt \tau r_0).
$$
and
\begin{align*}
r_0{\frac{\partial \hat w_p}{\partial r }} (\tau,r_0) + k \hat w_p(\tau,r_0) 
 =  r_0 \sqrt \tau  I_{k-1}(\sqrt \tau r_0) \int_{r_0}^\infty f(s) K_k(\sqrt{\tau}s) s ds.
\end{align*}
From
\begin{align*}
K_{k-1}(r)=-{\frac {k }{r}}K_{k }(r) -{\frac {dK_{k }}{dr}}(r)
\end{align*}
follows
\begin{align*}
r_0{\frac{\partial K_k(\sqrt \tau r)}{\partial r }} (\sqrt \tau r_0) + k K_k(\sqrt \tau r_0) =  -r_0 \sqrt \tau K_{k-1}(\sqrt \tau r_0).
\end{align*}

And then after excluding exponentially growing function $I_k(\sqrt{\tau}r)$ the general solution of (\ref{wkk1:besselrhs}) is as follows
\begin{align} \label{wkk1:omegasol}
 \hat \omega(\tau,r) = \frac{K_{k}(\sqrt{\tau}r) I_{k-1}(\sqrt \tau r_0)}{K_{k-1}(\sqrt \tau r_0)} \int_{r_0}^\infty f(s) K_{k}(\sqrt{\tau}s) s ds\\+ K_{k}(\sqrt{\tau}r)\int_{r_0}^r f(s) I_{k}(\sqrt{\tau}s) s ds  + 
I_{k}(\sqrt{\tau}r)\int_r^{\infty} f(s) K_{k}(\sqrt{\tau}s) s ds.  \nonumber 
\end{align}

The solution $w(t,x)$ of the heat equation is expressed by inverse Laplace transform
$$
w(t,r) = \frac1{2\pi i} \int_{\Gamma_\eta}e^{\tau t}\hat \omega(\tau,r)d\tau,
$$
where $\Gamma_\eta = \{ \tau \in \CM, \operatorname{Re}\tau = \eta \}$, $\eta$ - fixed arbitrary positive real number.

By analogy with derivation of $W_{k,k}[\cdot]$ in previous section since $\sqrt \tau$ involved in (\ref{wkk1:omegasol}) is a branching function, then we change contour of integration from $\Gamma_\eta$ to $\Gamma_{-\eta, \varepsilon}^-$ with some $\varepsilon>0$, where
\begin{eqnarray}
\Gamma_{-\eta, \varepsilon}^- = ( -\eta -i\infty, -\eta -i\varepsilon] \cup
[-\eta - i\varepsilon, - i\varepsilon]     
\cup [- i\varepsilon, i\varepsilon] \nonumber \\
\cup [i\varepsilon, -\eta + i\varepsilon] 
\cup [-\eta + i\varepsilon, -\eta + i\infty). \nonumber
\end{eqnarray}

From (\ref{wkk1:poison}) function $\hat \omega(\tau,r)$ can be expressed via resolvent $R(\tau) = (\Delta_k-\tau I)^{-1}$:
$$
\hat \omega  = - R(\tau) f(r).
$$

At this point there will be a significant difference from the case of derivation of $W_{k,k}[\cdot]$. From Lemma \ref{wkk1:lem1} since $1/r^k \in \ker(W_{k,k-1})$ then  $R(\tau)$ has nonzero residue at $\tau=0$ for $k>1$ (see \cite{Kato}). If we define oriented contour $\gamma_{\pm,\eta} = [-\eta,0] \cup [0,-\eta]$, then when going $\varepsilon \to 0$, $\eta \to \infty$ we will have the equality:
\begin{align} 
&w(t,r) = 
\frac1{2\pi i} \int_{-\infty}^0 e^{\tau t}\hat \omega(\tau-i0,r)d\tau \nonumber \\  
&+ \frac1{2\pi i}   \int_0^{-\infty} e^{\tau t}\hat \omega(\tau+i0,r)d\tau 
+  \operatorname{res}\limits_{\tau=0}[\hat \omega(\tau,r)] ,  \nonumber
\end{align}
where 
$$
{\mathrm  {res}}_{\tau=0}\,[\hat \omega(\tau,r)]=
\begin{cases} 
 0, k\leq 1, \\
 \lim\limits_{\rho \to 0} {1 \over {2\pi i}}\int \limits _{{|\tau|=\rho }}\!\hat \omega(\tau,r)\,d\tau, k>1.
 \end{cases}
$$ 
 
Set $\tau = -\lambda^2$. Then, we will have  
\begin{eqnarray} \label{wkk1:wsolwithresidue}
w(t,r) = \nonumber 
\frac1{\pi i} \int_0^\infty e^{-\lambda^2 t} \left ( w(-\lambda^2-i0,r)  - w_k(-\lambda^2+i0,r) \right )
 \lambda \mathrm{d}\lambda  \nonumber \\ +  \operatorname{res}\limits_{\tau=0}[\hat \omega(\tau,r)].
\end{eqnarray}

We decompose function $\hat \omega(\tau,r)$ in (\ref{wkk1:omegasol}) as
$$
 \hat \omega(\tau,r) = G_{k,1}(\tau,r)+ G_{k,2}(\tau,r), 
$$ 
where $G_{k,1}(\tau,r)$, $G_{k,2}(\tau,r)$ are defined as 
\begin{align*}
G_{k,1}(\tau,r) &= \frac{K_k(\sqrt{\tau}r) I_{k-1}(\sqrt \tau r_0)}{K_{k-1}(\sqrt \tau r_0)} \int_{r_0}^\infty f(s) K_k(\sqrt{\tau}s) s ds, \nonumber \\
G_{k,2}(\tau,r) &= K_k(\sqrt{\tau}r)\int_{r_0}^r f(s) I_k(\sqrt{\tau}s) s ds \nonumber \\ 
&+ I_k(\sqrt{\tau}r)\int_r^{\infty} f(s) K_k(\sqrt{\tau}s) sds. \nonumber
\end{align*}

With help of (\ref{wkk:bescont1})-(\ref{wkk:bescont2}) we will have
\begin{align*}
&G_{k,1}(-\lambda^2-i0,r)  - G_{k,1}(-\lambda^2+i0,r) = \\
&=\int_{r_0}^\infty \Big ( \frac{K_k(-i\lambda r) I_{k-1}(-i\lambda r_0)K_k(-i\lambda s)}{K_{k-1}(-i\lambda r_0)} \\
&- \frac{K_k(i\lambda r) I_{k-1}(i\lambda r_0)K_k(i\lambda s)}{K_{k-1}(i\lambda r_0)} \Big ) f(s)  s ds \nonumber \\
&=-\frac{\pi i}2 \int_{r_0}^\infty \Big ( \frac{H_k^{(1)}(\lambda r) J_{k-1}(\lambda r_0)H_k^{(1)}(\lambda s)}{H_{k-1}^{(1)}(\lambda r_0)} \\
&-  \frac{H_k^{(1)}(-\lambda r) J_{k-1}(-\lambda r_0)H_k^{(1)}(-\lambda s)}{H_{k-1}^{(1)}(-\lambda r_0)}
\Big ) f(s)  s ds \nonumber \\
&=-\frac{\pi i}2 \int_{r_0}^\infty \Big ( \frac{H_k^{(1)}(\lambda r) J_{k-1}(\lambda r_0)H_k^{(1)}(\lambda s)}{H_{k-1}^{(1)}(\lambda r_0)} \\
&+  \frac{H_k^{(2)}(\lambda r) J_{k-1}(\lambda r_0)H_k^{(2)}(\lambda s)}{H_{k-1}^{(2)}(\lambda r_0)}
\Big ) f(s)  s ds \nonumber \\
&=-\frac{\pi i}2 \int_{r_0}^\infty  \Big ( \frac{(J_k(\lambda r)+iY_k(\lambda r)) J_{k-1}(\lambda r_0)(J_k(\lambda s)+iY_k(\lambda s))}{J_{k-1}(\lambda r_0)+iY_{k-1}(\lambda r_0)} \\ 
& + \frac{(J_k(\lambda r)-iY_k(\lambda r)) J_{k-1}(\lambda r_0)(J_k(\lambda s)-iY_k(\lambda s))}{J_{k-1}(\lambda r_0)-iY_{k-1}(\lambda r_0)}  \Big ) f(s)  s ds.
\nonumber
\end{align*}

From lemma \ref{wkk:besselrelationlem} follows 
\begin{align*}
G_{k,2}(-\lambda^2-i0,r)  - G_{k,2}(-\lambda^2+i0,r) = \pi i \int_{r_0}^\infty J_k(\lambda r) J_k(\lambda s) f(s) s ds,
\end{align*}
and
\begin{align*}&\hat \omega(-\lambda^2-i0,r) - \hat \omega(-\lambda^2+i0,r) \nonumber \\ 
&=G_{k,1}(-\lambda^2-i0,r)  - G_{k,1}(-\lambda^2+i0,r) \nonumber \\
&+ G_{k,2}(-\lambda^2-i0,r)  - G_{k,2}(-\lambda^2+i0,r)   
=-\frac{\pi i}2 \int_{r_0}^\infty \Big ( \\ & \frac{J_{k-1}(\lambda r_0)(J_k(\lambda r)+iY_k(\lambda r)) (J_k(\lambda s)+iY_k(\lambda s)) (J_{k-1}(\lambda r_0)-iY_{k-1}(\lambda r_0) )}{J_{k-1}(\lambda r_0)^2+Y_{k-1}(\lambda r_0)^2}\\
&+ \frac{J_{k-1}(\lambda r_0)(J_k(\lambda r)-iY_k(\lambda r)) (J_k(\lambda s)-iY_k(\lambda s))(J_{k-1}(\lambda r_0)+iY_{k-1}(\lambda r_0))}{J_{k-1}(\lambda r_0)^2+Y_{k-1}(\lambda r_0)^2}
\\
&-2\frac {J_k(\lambda r) J_k(\lambda s) (J_{k-1}(\lambda r_0)^2+Y_{k-1}(\lambda r_0)^2)} {J_{k-1}(\lambda r_0)^2+Y_{k-1}(\lambda r_0)^2} \Big )  f(s) s ds.\\
\end{align*}

After opening brackets in previous formula we will have
\begin{align*}&\hat \omega(-\lambda^2-i0,r) - \hat \omega(-\lambda^2+i0,r)\\
=&\pi i \int_{r_0}^\infty  \frac{ \left (J_{k-1}(\lambda r_0) Y_k(\lambda r) - Y_{k-1}(\lambda r_0) J_k(\lambda r) \right )
 } {J_{k-1}(\lambda r_0)^2 + Y_{k-1}(\lambda r_0)^2} \\
&\times \left ( J_{k-1}(\lambda r_0) Y_k(\lambda s) - Y_{k-1}(\lambda r_0) J_k(\lambda s) \right )  f(s) s ds 
\end{align*}

Then we substitute the obtained jump discontinuity $\hat \omega(-\lambda^2-i0,r) - \hat \omega(-\lambda^2+i0,r)$ into (\ref{wkk1:wsolwithresidue}):
\begin{align*}
&w(t,r) = \nonumber 
 \int_0^\infty  \frac{ \left (J_{k-1}(\lambda r_0) Y_k(\lambda r) - Y_{k-1}(\lambda r_0) J_k(\lambda r) \right ) } 
{\sqrt{J_{k-1}(\lambda r_0)^2 + Y_{k-1}(\lambda r_0)^2}} \\ 
& \times \left ( \int_{r_0}^\infty \frac{ \left (J_{k-1}(\lambda r_0) Y_k(\lambda s) - Y_{k-1}(\lambda r_0) J_k(\lambda s) \right ) } 
{\sqrt{J_{k-1}(\lambda r_0)^2 + Y_{k-1}(\lambda r_0)^2}}  f(s) s ds \right )  e^{-\lambda^2 t} \lambda \mathrm{d}\lambda 
\\
&+ \operatorname{res}\limits_{\tau=0}[\hat \omega(\tau,r)]. 
\end{align*}

Now we will find the residues of $\hat \omega(\tau,r)$ at $\tau=0$. 

\begin{lem} The residues of $\hat \omega(\tau,r)$ defined by (\ref{wkk1:omegasol}) for $k>1$ at $\tau=0$  are given by
$$\operatorname{res}\limits_{\tau=0}[\hat \omega(\tau,r)] = \frac {2(k-1) r_0^{2k-2}} {r^{k}} \int_{r_0}^\infty s^{-k+1} f(s) \ds,k>1.
$$ 
\end{lem}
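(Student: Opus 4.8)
\emph{Proof strategy.} The plan is to start from the decomposition $\hat\omega(\tau,r)=G_{k,1}(\tau,r)+G_{k,2}(\tau,r)$ already introduced, to observe that only $G_{k,1}$ is singular at $\tau=0$, to read off its simple pole from the small-argument asymptotics of the modified Bessel functions, and to check that the remaining contribution integrates to zero over a circle $|\tau|=\rho$ as $\rho\to0$.

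First I would dispose of $G_{k,2}$. In $K_k(\sqrt\tau r)\int_{r_0}^r f(s)I_k(\sqrt\tau s)s\,\ds$ the factor $K_k(\sqrt\tau r)=O(\tau^{-k/2})$ is killed by $I_k(\sqrt\tau s)=O(\tau^{k/2})$ on the compact interval $[r_0,r]$; in $I_k(\sqrt\tau r)\int_r^\infty f(s)K_k(\sqrt\tau s)s\,\ds$ the growth $I_k(\sqrt\tau r)=O(\tau^{k/2})$ cancels the decay $\int_r^\infty f(s)K_k(\sqrt\tau s)s\,\ds=O(\tau^{-k/2})$, the last bound following from $|z^kK_k(z)|\le C$ for $\operatorname{Re}z\ge0$, the hypothesis $f\sqrt s\in L_1(r_0,\infty)$, and boundedness of $s^{-k+1/2}$ on $[r_0,\infty)$ for $k>1$. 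Hence $G_{k,2}$ stays bounded near $\tau=0$ and $\oint_{|\tau|=\rho}G_{k,2}(\tau,r)\,d\tau=O(\rho)\to0$.

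The pole sits entirely in $G_{k,1}(\tau,r)=A(\tau,r)B(\tau)$ with $A(\tau,r)=K_k(\sqrt\tau r)I_{k-1}(\sqrt\tau r_0)/K_{k-1}(\sqrt\tau r_0)$ and $B(\tau)=\int_{r_0}^\infty f(s)K_k(\sqrt\tau s)s\,\ds$. Writing each Bessel factor as (leading power)$\times$(bounded factor) and using $(z/2)^{-\nu}K_\nu(z)\to\tfrac12\Gamma(\nu)$ and $(z/2)^{-\nu}I_\nu(z)\to1/\Gamma(\nu+1)$ as $z\to0$ — uniformly for $z=\sqrt\tau$ with $\arg\tau\in(-\pi,\pi)$ — gives $\tau^{1-k/2}A(\tau,r)\to r^{-k}r_0^{2(k-1)}/(2^{k-2}\Gamma(k-1))$. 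For $B$ I would write $(\sqrt\tau)^kB(\tau)=\int_{r_0}^\infty(\sqrt\tau s)^kK_k(\sqrt\tau s)\,f(s)s^{-k+1}\,\ds$ and let $\tau\to0$ by dominated convergence: the integrand is dominated by $C|f(s)\sqrt s|s^{-k+1/2}\in L_1(r_0,\infty)$ and tends pointwise to $2^{k-1}\Gamma(k)f(s)s^{-k+1}$, so $(\sqrt\tau)^kB(\tau)\to2^{k-1}\Gamma(k)\int_{r_0}^\infty f(s)s^{-k+1}\,\ds$, again uniformly in $\arg\tau$. Multiplying the two limits and using $\Gamma(k)/\Gamma(k-1)=k-1$ together with the powers of $2$ yields $\tau G_{k,1}(\tau,r)\to c(r):=2(k-1)r_0^{2k-2}r^{-k}\int_{r_0}^\infty s^{-k+1}f(s)\,\ds$.

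To conclude I would split $G_{k,1}(\tau,r)=c(r)/\tau+\varepsilon(\tau,r)/\tau$ with $\varepsilon(\tau,r)=\tau G_{k,1}(\tau,r)-c(r)\to0$ uniformly on $|\tau|=\rho$; then $\tfrac1{2\pi i}\oint_{|\tau|=\rho}c(r)\tau^{-1}\,d\tau=c(r)$, while $|\tfrac1{2\pi i}\oint_{|\tau|=\rho}\varepsilon(\tau,r)\tau^{-1}\,d\tau|\le\sup_{|\tau|=\rho}|\varepsilon(\tau,r)|\to0$, so $\operatorname{res}_{\tau=0}\hat\omega(\tau,r)=c(r)$, which is the asserted formula. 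The main obstacle — everything else being bookkeeping of Gamma-function and power-of-two constants — is making precise the two uniformity-in-$\arg\tau$ statements (for the Bessel small-argument asymptotics along the shrinking circle, and for the dominated-convergence limit defining $B(\tau)$), since it is exactly this uniformity that turns the contour integral over the vanishing circle into the coefficient of $\tau^{-1}$.
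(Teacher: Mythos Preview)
Your approach is essentially the same as the paper's: decompose $\hat\omega=G_{k,1}+G_{k,2}$, use the small-argument asymptotics $I_\nu(z)\sim (z/2)^\nu/\Gamma(\nu+1)$ and $K_\nu(z)\sim \tfrac12\Gamma(\nu)(2/z)^\nu$ to see that $G_{k,2}$ stays bounded while $G_{k,1}$ carries a simple pole with the stated coefficient, and your bookkeeping of the constants is correct. You are in fact more careful than the paper (which simply writes $G_{k,1}(\tau,r)\sim c(r)/\tau$ and $K_kI_k\sim\text{const}$ without further justification); the one point to watch is your claim $|z^kK_k(z)|\le C$ on the \emph{closed} half-plane $\operatorname{Re}z\ge0$, which fails on the imaginary axis for $k>1/2$ --- but since $\sqrt\tau$ lies in the \emph{open} right half-plane this is exactly the uniformity-in-$\arg\tau$ issue you already flag, and it does not affect the argument at the level of rigor the paper itself adopts.
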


\begin{proof}We will use the asymptotic form for small arguments $0 < |z| \leq \sqrt{k+1}$ (see \cite{BE}):
\begin{align*}
 I_{k }(z) & \sim {\frac {1}{\Gamma (k +1)}}\left({\frac {z}{2}}\right)^{k },\\K_{k }(z)&\sim {\frac {\Gamma (k )}{2}}\left({\dfrac {2}{z}}\right)^{k }.
\end{align*}

For $k>1$ 
$$
G_{k,1}(\tau,r) \sim  \frac {2(k-1) r_0^{2k-2}} {\tau r^k} \int_{r_0}^\infty s^{-k+1} f(s) \ds
$$
and
$$
\operatorname{res}\limits_{\tau=0}[G_{k,1}(\tau,r)]  = \frac {2(k-1) r_0^{2k-2}} {r^k} \int_{r_0}^\infty s^{-k+1} f(s) \ds.
$$

Since
\begin{align*}
K_{k }(\sqrt \tau r)I_{k }(\sqrt \tau s)&\sim \frac s{2kr}
\end{align*}
the residues of $G_{k,2}(\tau,r)$ are equal to zero and 
$$\operatorname{res}\limits_{\tau=0}[\hat \omega(\tau,r)] = \operatorname{res}\limits_{\tau=0}[\hat G_{k,1}(\tau,r)].$$ 
\end{proof}

Finally, the solution of the heat equation (\ref{wkk1:omegaeqpolar}),  supplied with Robin condition (\ref{wkk1:robin_bound1}) and initial datum $f(r)$ is represented as
\begin{align}
{\rm for~} k \leq 1:\nonumber  \\
&w(t,r) = \label{wkk1:heateqdirectsol1}
\int_0^\infty  \frac{  R_{k,k-1}(\lambda, r)} 
{J_{k-1}(\lambda r_0)^2 + Y_{k-1}(\lambda r_0)^2} \\ 
&\times \left ( \int_{r_0}^\infty R_{k,k-1}(\lambda, s)  f(s) s \mathrm{d} s \right )    e^{-\lambda^2 t} \lambda \mathrm{d}\lambda \nonumber,\\
{\rm for~} k > 1: \nonumber \\
&w(t,r) = \label{wkk1:heateqdirectsol2}
\int_0^\infty  \frac{  R_{k,k-1}(\lambda, r)} 
{J_{k-1}(\lambda r_0)^2 + Y_{k-1}(\lambda r_0)^2} \\ 
&\times \left ( \int_{r_0}^\infty R_{k,k-1}(\lambda, s)  f(s) s \mathrm{d} s \right )    e^{-\lambda^2 t} \lambda \mathrm{d}\lambda \nonumber \\
&+ \frac {2(k-1) r_0^{2k-2}} {r^{k}} \int_{r_0}^\infty s^{-k+1} f(s) \ds \nonumber
\end{align}

These relations lead to the following  
\begin{thm} \label{wkk1:thmheateq}
Let $f(r) \sqrt r \in L_1(r_0,\infty)$, $r_0>0$, $k \in \ZM$. Then the solution $w(t,r)\in C\left (\RM_+;L_2(r_0,\infty; r) \right )$ of (\ref{wkk1:omegaeqpolar}), (\ref{wkk1:omegaeqinit}),(\ref{wkk1:robin_bound1}) is given by 
\begin{align}
{\rm for~} k \leq 1:\nonumber  \\
&w(t,r) = W^{-1}_{k,k-1}\left [ e^{-\lambda^2 t} W_{k,k-1}[f] \right ], \label{wkk1:heatsol1}\\
{\rm for~} k > 1: \nonumber \\
&w(t,r) = W^{-1}_{k,k-1}\left [ e^{-\lambda^2 t} W_{k,k-1}[f] \right ]\label{wkk1:heatsol2}\\
&+ \frac {2(k-1) r_0^{2k-2}} {r^{k}} \left (\frac 1 r^{k}, f \right )_{L_2(r_0,\infty; r)} \nonumber
\end{align}
\end{thm}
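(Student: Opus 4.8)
The plan is to follow the scheme used for \thmref{wkk:thmheateq}, with one essential change: because the Robin condition (\ref{wkk1:robin_bound1}) depends on $k$, it does not come from a single boundary condition on the exterior of the disc, so the ``restrict the two‑dimensional heat semigroup to a Fourier mode'' argument is unavailable and the generator has to be built by hand. Concretely I would (a) realise $\Delta_k$ with (\ref{wkk1:robin_bound1}) as $-C^{*}C$ for a suitable first‑order operator $C$, which makes it self‑adjoint and $\le 0$, hence the generator of a contraction semigroup; (b) identify the resulting semigroup solution with the expression produced by the Laplace‑transform computation that precedes the theorem; and (c) check that the right‑hand sides of (\ref{wkk1:heatsol1}), (\ref{wkk1:heatsol2}) are well defined.

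For (a) set $C=\partial_r+\dfrac{k}{r}$ on $L_2(r_0,\infty;r)$; it is closed and densely defined (its domain contains $C_0^\infty(r_0,\infty)$, and $\|u/r\|_{L_2(r_0,\infty;r)}\le r_0^{-1}\|u\|_{L_2(r_0,\infty;r)}$ shows the multiplication term is harmless at infinity). Integration by parts gives $C^{*}v=-\partial_r v+\dfrac{k-1}{r}v$ on the domain $\{v:\ C^{*}v\in L_2(r_0,\infty;r),\ v(r_0)=0\}$, and a direct computation yields $C^{*}Cu=-\dfrac1r\partial_r(r\,\partial_r u)+\dfrac{k^2}{r^2}u=-\Delta_k u$; equivalently, for $u$ obeying (\ref{wkk1:robin_bound1}) one has $\langle-\Delta_k u,u\rangle_{L_2(r_0,\infty;r)}=\|Cu\|_{L_2(r_0,\infty;r)}^2$ after integrating by parts and using $r_0u'(r_0)=-ku(r_0)$. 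Moreover $Cu\in\operatorname{dom}(C^{*})$ forces $(Cu)(r_0)=0$, i.e. $r_0u'(r_0)+ku(r_0)=0$, so the operator domain of $C^{*}C$ encodes precisely (\ref{wkk1:robin_bound1}). By von Neumann's theorem $-\Delta_k=C^{*}C$ is self‑adjoint and nonnegative, so $\Delta_k$ generates a strongly continuous contraction semigroup $e^{t\Delta_k}$ on $L_2(r_0,\infty;r)$, and for $f$ with $f\sqrt r\in L_1\cap L_2(r_0,\infty)$ the function $w(t,\cdot):=e^{t\Delta_k}f$ is the unique solution of (\ref{wkk1:omegaeqpolar}), (\ref{wkk1:omegaeqinit}), (\ref{wkk1:robin_bound1}) and lies in $C(\RM_+;L_2(r_0,\infty;r))$. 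Note that $\ker(C^{*}C)=\ker C=\operatorname{span}\{r^{-k}\}$, which belongs to $L_2(r_0,\infty;r)$ exactly when $k>1$; this is the source of the dichotomy in the statement.

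For (b), the Laplace transform $\hat\omega(\tau,\cdot)=\int_0^\infty e^{-\tau t}w(t,\cdot)\,dt$ equals $(C^{*}C+\tau I)^{-1}f=-R(\tau)f$ and hence coincides with the explicit formula (\ref{wkk1:omegasol}). Inverting the Laplace transform and moving $\Gamma_\eta$ to $\Gamma_{-\eta,\varepsilon}^{-}$ --- legitimate because $\tau\mapsto\hat\omega(\tau,r)$ is holomorphic off $(-\infty,0]$ for $k\le 1$ and off $(-\infty,0]$ with a simple pole at $\tau=0$ for $k>1$, the arcs at infinity being controlled by (\ref{wkk:asymp1})--(\ref{wkk:asymp2}) and the segments through $\pm i\varepsilon$ vanishing as $\varepsilon\to0$ by the small‑argument behaviour of $I_k,K_k$ --- one collects the jump of $\hat\omega$ across the cut, already evaluated above via \lemref{wkk:besselrelationlem}, together with $\operatorname{res}_{\tau=0}[\hat\omega(\tau,r)]$. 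This reproduces (\ref{wkk1:heateqdirectsol1}), (\ref{wkk1:heateqdirectsol2}), which, rewritten through $W_{k,k-1}$ and $W^{-1}_{k,k-1}$ and using $\int_{r_0}^\infty s^{-k+1}f(s)\,\ds=(1/r^k,f)_{L_2(r_0,\infty;r)}$, is exactly (\ref{wkk1:heatsol1}), (\ref{wkk1:heatsol2}); one may observe that for $k>1$ the extra term $2(k-1)r_0^{2k-2}\,r^{-k}(1/r^k,f)_{L_2(r_0,\infty;r)}$ is the orthogonal projection of $f$ onto $\ker\Delta_k$, consistent with $e^{t\Delta_k}f$ tending to it as $t\to\infty$.

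For (c), the kernel of $W_{k,k-1}$ obeys the same bound as in \lemref{wkk:lemest} --- its proof uses only $\max(|J_{k-1}(\lambda r_0)|,|Y_{k-1}(\lambda r_0)|)/\sqrt{J_{k-1}^2(\lambda r_0)+Y_{k-1}^2(\lambda r_0)}\le 1$ and the large‑argument asymptotics of $J_k,Y_k$ --- so $W_{k,k-1}[f](\lambda)=O(\lambda^{-1/2})$ for $\lambda>1$ and is bounded near $\lambda=0$; hence $e^{-\lambda^2t}W_{k,k-1}[f](\lambda)\,\lambda$ is integrable on $(0,\infty)$ for every $t>0$ and $W^{-1}_{k,k-1}[e^{-\lambda^2t}W_{k,k-1}[f]]$ makes sense, while for $k>1$ the correction is a finite multiple of $r^{-k}$ because $\int_{r_0}^\infty s^{-k+1}|f(s)|\,\ds\le r_0^{1/2-k}\|f\sqrt r\|_{L_1(r_0,\infty)}$. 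The step I expect to be the main obstacle is (a): spotting the factorisation $-\Delta_k=C^{*}C$ with $C=\partial_r+k/r$, which simultaneously yields self‑adjointness and generation and locates exactly when $\ker\Delta_k$ is nontrivial --- because treating (\ref{wkk1:robin_bound1}) naively as a boundary perturbation of the Dirichlet problem hits the critical case, the boundary term $-k|u(r_0)|^2$ being form‑bounded by $\int r|u'|^2+k^2\int|u|^2/r$ with relative bound exactly $1$, so the standard KLMN argument does not apply and the factorisation is what keeps the construction clean; the contour deformation and residue extraction in (b) are then routine given the decay estimates above.
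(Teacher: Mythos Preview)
Your proposal is correct and is in fact more complete than the paper's own proof, which simply reads ``the proof of this theorem is similar to ones given in \thmref{wkk:thmheateq}'' and leaves the details to the reader. You have correctly identified the point where that analogy breaks down: in \thmref{wkk:thmheateq} the generation of the semigroup is obtained by restricting the two–dimensional Dirichlet heat semigroup to the $k$-th Fourier mode via $\Delta[f(r)e^{ik\varphi}]=e^{ik\varphi}\Delta_k f(r)$, but the Robin condition (\ref{wkk1:robin_bound1}) depends on $k$ and therefore does not arise from a single boundary condition on the exterior disc, so that argument is unavailable here.

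Your replacement step~(a) --- realising $-\Delta_k$ with the Robin condition as $C^{*}C$ with $C=\partial_r+k/r$ --- is a genuinely different and cleaner route than anything the paper indicates. It simultaneously yields self-adjointness and semigroup generation, encodes the boundary condition (\ref{wkk1:robin_bound1}) as $(Cu)(r_0)=0$ without any form-perturbation argument, and makes the dichotomy $k\le 1$ versus $k>1$ transparent through $\ker(C^{*}C)=\ker C=\operatorname{span}\{r^{-k}\}$. Steps~(b) and~(c) then follow the paper's derivation leading to (\ref{wkk1:heateqdirectsol1})--(\ref{wkk1:heateqdirectsol2}) and the bound of \lemref{wkk:lemest}. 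One small correction: in~(c) the claim that $W_{k,k-1}[f](\lambda)$ is \emph{bounded} near $\lambda=0$ is not literally true for all $k$ (for instance $k=1$ gives a $\lambda^{-1}/|\ln\lambda|$ behaviour of the kernel), but the combined integrand in (\ref{wkk1:heateqdirectsol1}), (\ref{wkk1:heateqdirectsol2}) is still integrable near $0$, which is all that is needed. Also note that your added hypothesis $f\sqrt r\in L_2$ is in fact required for the semigroup solution to lie in $L_2(r_0,\infty;r)$; it appears in \thmref{wkk:thmheateq} but seems to have been inadvertently dropped from the statement of \thmref{wkk1:thmheateq}.
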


The proof of this theorem is similar to ones given in theorem \ref{wkk:thmheateq}.

\subsection{Proof of the inversion theorem }
\begin{proof}

For $k\leq 1$ the proof of the theorem \ref{wkk1:thm} repeats word by word the proof of theorem \ref{wkk:thm} in section \ref{wkk:proof}. We must prove the validity of the limit transition $t\to 0$ in (\ref{wkk1:heatsol1}). Let $w(t,r)$ be solution of (\ref{wkk1:omegaeqpolar}), (\ref{wkk1:robin_bound1}) with initial datum $f(r)$. $\Delta_k$ generates strongly continuous semi-group in $L_2(r_0,\infty; r)$ and 
$$
\|w(t,\cdot) - f(\cdot)\| \to 0,~t\to 0.
$$ 

Then as in section \ref{wkk:proof} one could get
$$
w(t,\cdot) \rightharpoondown W^{-1}_{k,k-1}\left [W_{k,k-1} [f] \right ],~t\to 0
$$
and so almost everywhere holds
$$f(\cdot)=W^{-1}_{k,k-1}\left [W_{k,k-1} [f] \right ]\|.$$

For $k>1$ we have addition stationary term in (\ref{wkk1:heatsol2}) and also can pass to limit $t\to 0$ obtaining the required inversion formula.
\end{proof}


\end{document}